\documentclass{article}

\usepackage[a4paper,left=2.75cm,right=2.75cm,top=3cm,bottom=3cm]{geometry} 
\usepackage[dvipsnames]{xcolor} 
\usepackage[colorlinks,linkcolor={NavyBlue},citecolor={NavyBlue},urlcolor={black}]{hyperref} 
\usepackage{graphicx,caption,subcaption} 
\usepackage{tikz}
\usetikzlibrary{math}

\usepackage{xspace} 
\usepackage{amsmath,amsthm,amsfonts,amssymb,bbm,mathrsfs,mathtools} 
\usepackage{enumitem} 

\newtheorem{theorem}{Theorem}[section]
\newtheorem{proposition}[theorem]{Proposition}
\newtheorem{lemma}[theorem]{Lemma}

\theoremstyle{remark}
\newtheorem{remark}[theorem]{Remark}

\setlist[enumerate,1]{label=(\roman*)} 

\let\originalleft\left
\let\originalright\right
\renewcommand{\left}{\mathopen{}\mathclose\bgroup\originalleft}
\renewcommand{\right}{\aftergroup\egroup\originalright}

\renewcommand{\tilde}{\widetilde}

\let\Haccent\H

\newcommand{\E}{\mathbb{E}}

\renewcommand{\H}{\mathbb{H}}

\renewcommand{\P}{\mathbb{P}}

\newcommand{\R}{\mathbb{R}}

\newcommand{\cL}{\mathcal{L}}

\newcommand{\cN}{\mathcal{N}}

\newcommand{\cZ}{\mathcal{Z}}

\newcommand{\sF}{\mathscr{F}}

\newcommand{\ie}{i.e.\@\xspace}
\newcommand{\eg}{e.g.\@\xspace}
\newcommand{\resp}{resp.\@\xspace}

\newcommand{\pp}{pp.\@\xspace}

\newcommand{\e}{\mathrm{e}}
\newcommand{\vep}{\varepsilon}
\newcommand{\1}{\mathbbm{1}}
\renewcommand{\d}[1]{\mathop{}\!\mathrm{d}#1}

\title{Almost sure path localisation for the derivative martingale of branching Brownian motion}
\author{%
  Julien~Berestycki%
\footnote{Department of Statistics, University of Oxford.}%
, Louis~Chataignier%
\footnote{Institut de Math\'ematiques de Toulouse, Universit\'e de Toulouse.}%
, and
  Gabriel~Flath\footnotemark[1]
    }
\date{\today}

\begin{document}

\maketitle

\begin{abstract}
The evolution of the front of branching Brownian motion is determined by the limit of the derivative martingale. In this work, we characterise which particles contribute to this limit. Precisely, we establish a sharp almost sure path localisation result which shows that the limit is determined by those particles whose trajectory stays within a thin tube at distance $s^{1/2}$ from the extremal particle.
\end{abstract}

\section{Introduction}

The (binary) branching Brownian motion is a continuous-time branching Markov process constructed as follows.
At time $t = 0$, a single particle starts a Brownian motion in $\R$ with variance $\sigma^2 > 0$ and drift $\rho \in \R$, from the origin.
After a random time that follows an exponential distribution with rate $\lambda > 0$, it splits into two, or equivalently, it dies and gives birth to two children.
These new particles then repeat the same process, independently of each other.
For a formal construction, see~\cite{Chauvin1991,Chataignier2024}.

We denote by $\P$ the law of the branching Brownian motion and by $\E$ the associated expectation.
Given $\gamma \ge 0$ and $x \in \R$, it is also convenient to consider $\P_{\gamma,x}$ (\resp $\P_x$) the law of the branching Brownian motion starting from position $x$ at time $\gamma \ge 0$ (\resp $\gamma = 0$), and by $\E_{\gamma,x}$ (\resp $\E_x$) the associated expectation.
With this notation, $\P = \P_{0,0}$.
We also consider $(B_t)_{t \ge 0}$ a standard Brownian motion which, under $\P_{\gamma,x}$, is at position $x$ at time $\gamma$ and is independent of the branching Brownian motion.

Let $\mathcal{N}_t$ be the set of particles at time $t$. For $v \in \mathcal{N}_t$ and $s\in \left[0,t\right]$, we say that $u\in \mathcal{N}_s$ is its ancestor at time $s$ if $v$ is a descendant of $u$ and denote it $u\preceq v$. For $u\in \mathcal{N}_t$ and $s\in \left[0,t\right]$, we write $X_u(s) \in \mathbb{R}$  for the position of particle $u$ (or its ancestor) at time $s$.
We choose to work in the setting of the boundary case, in the sense of~\cite{BigginsKyprianou2005}.
This means that we fix the parameters $\sigma^2$, $\rho$, and $\lambda$ such that
\begin{equation*}
	\E \left[ \sum_{u \in \cN_t} \e^{-X_u(t)} \right] = 1 \quad \text{and} \quad \E \left[ \sum_{u \in \cN_t} X_u(t) \e^{-X_u(t)} \right] = 0,
\end{equation*}
for any $t \ge 0$. For instance, this holds for $\sigma^2 = 1$, $\rho = 1$, and $\lambda = 1/2$ (see~\cite{AidekonBerestyckiBrunetShi2013}), which we assume from now on.

With this choice of parameters, the minimal position at time $t$ denoted by $I_t \coloneq \inf_{u \in \cN_t} X_u(t)$, grows unbounded with asymptotic speed zero:
\begin{equation}\label{eq:almost_sure_divergence} \text{almost surely }
\begin{cases}
    I_t  \xrightarrow[t \to \infty]{} \infty,\\
    I_t /t  \xrightarrow[t \to \infty]{} 0 .
\end{cases}
\end{equation}

In a seminal paper, McKean~\cite{McKean1975} showed that solutions of a certain reaction-diffusion equation -often called the F--KPP equation~\cite{Fisher1937,KolmogorovPetrovskyPiskunov1937}- can be represented in terms of expectation of the branching Brownian motion (in much the same way that solutions of the heat equation can be represented by Brownian motion expectations). This then allowed Bramson~\cite{Bramson1983} to show that $I_t$ is concentrated around $(3/2) \log t$ and that in fact
\[
I_t - \frac 32  \log t \xrightarrow[t \to \infty]{\text{d}} W,
\]
where $W$ is a random variable whose distribution corresponds to minimal travelling wave solution of the F--KPP equation, \ie $\P(W \ge -x)=w(x)$ is the unique (up to translation) solution of 
\begin{equation}\label{eq:travelling_wave}
    w'' + 2w' + w^2 - w = 0.
\end{equation}

Central to this analysis are a family of additive martingales studied by McKean~\cite{McKean1975}.
For any $\beta \ge 0,$ the process
\begin{equation}\label{eq:additive_martingale}
    W_t(\beta) = \sum_{u \in \cN_t} \e^{-\beta X_u(t) - (1-\beta)^2t/2}, \qquad t\ge 0,
\end{equation}
is a non-negative martingale and thus converges almost surely to some $W_\infty(\beta)$ as $t \to \infty$.

It is shown in McKean~\cite{McKean1975,McKean1976} and Lalley and Sellke~\cite{LalleySellke1987} that the limit $W_\infty(\beta)$ is non-trivial if and only if $\beta < 1$.
At the critical parameter $\beta =1$, Lalley and Sellke~\cite{LalleySellke1987} introduced the derivative martingale
\begin{equation}
    Z_t \coloneq -\frac{\d{W_t}}{\d{\beta}}(1) = \sum_{u \in \cN_t} X_u(t) \e^{-X_u(t)}. \label{eq:derivative_martingale}
\end{equation}
They proved the almost sure convergence of $Z_t$ to some $Z_\infty \in (0,\infty)$ and the existence of a constant $C^* > 0$ such that
\begin{equation*}
    w(x) = \P(W \ge - x) = \E \left[ \exp(-C^* Z_\infty \e^{-x}) \right].
\end{equation*}
We mention that Neveu~\cite{Neveu1988} independently proved the equality between the left-hand and right-hand sides, however, without making a link with the convergence of $I_t$.

More generally, the derivative martingale is a key tool to describe the asymptotic behavior of the branching Brownian motion.
Indeed, it appears in most of the results concerning particles close enough to $I_t$, \eg in the limit of the extremal process~\cite{ArguinBovierKistler2013,AidekonBerestyckiBrunetShi2013} or in the density of particles at sublinear distance from $I_t$~\cite{Flath2026}.
An important instance is the following Seneta--Heyde scaling,
\begin{equation}\label{eq:seneta_heyde}
    \sqrt{t} W_t(1) \xrightarrow[t \to \infty]{\P} \sqrt{\frac{2}{\pi}} Z_\infty,
\end{equation}
proved by A\"id\'ekon and Shi~\cite{AidekonShi2014} in the context of branching random walk.
A consequence of the Croft--Kingman lemma is that the convergence in probability~\eqref{eq:seneta_heyde} applies directly to branching Brownian motion (see \eg \cite[Corollary~1.A.2]{Chataignier2026}).

Moreover, in the context of branching random walk, Madaule~\cite{Madaule2016a} proved that, for any bounded continuous function $f$,
\begin{equation}\label{eq:scaling_derivative}
    \sum_{u \in \cN_t} X_u(t) \e^{-X_u(t)} f((X_u(st)/\sqrt{t})_{s \in [0,1]}) \xrightarrow[t \to \infty]{\P} Z_\infty \E[f((R_s)_{s \in [0,1]})],
\end{equation}
where $(R_s)_{s \ge 0}$ is a $3$-dimensional Bessel process.
In particular, with high probability as $t \to \infty$, particles that contribute to $Z_t$ are located at a position of order $\sqrt{t}$.
The fluctuations of the derivative martingale around its limit as well as those of~\eqref{eq:scaling_derivative} for a wide class of functions are studied by Maillard and Pain~\cite{MaillardPain2019,MaillardPain2026}.
This includes the case $f(x) = 1/x$, which provides the fluctuations of the Seneta--Heyde scaling~\eqref{eq:seneta_heyde}.

Plugging $f(x) = \1_{[a,b]}(x) $ in~\eqref{eq:scaling_derivative},  suggests that the convergence of $Z_t$ towards $Z_\infty$ should still hold (but only in probability) if one only counts particles in $[a\sqrt t, b\sqrt t]$ in the sum $Z_t$ with $a$ tending to zero and $b$ tending to infinity. We show that, in fact, one can restrict oneself to particles whose entire trajectory stays in an interval around $s \mapsto \sqrt s$ and still get an almost sure convergence. 

We introduce the following integral test for non-negative functions $a$ and $b$ from $\R_{\ge0}$ to $\R_{\ge 0}$:
\begin{equation}
\tag{IC}\label{eq:integrability_conditions}
\int^\infty \frac{a(s)}{s} \d{s} < \infty \quad \text{and} \quad \int^\infty \frac{b(s)^3}{s} \e^{-b(s)^2/2} \d{s} < \infty.
\end{equation}
The following is our main result.
\begin{theorem}\label{th:path_localisation}
Let $a,b$ be functions $\R_{\ge0} \to \R_{\ge 0}$ satisfying $a \downarrow 0$ and $b \uparrow \infty$.
Then, almost surely,
\begin{equation}\label{eq:path_localisation}
\lim_{r \to \infty} \lim_{t \to \infty} \sum_{u \in \cN_t} X_u(t) \e^{-X_u(t)} \1_{\{\forall s \in [r,t], X_u(s) \in [a(s)\sqrt{s},b(s)\sqrt{s}]\}} = \begin{cases}
    Z_\infty &\text{ if \eqref{eq:integrability_conditions} holds,} \\ 0 &\text{ otherwise.}
\end{cases} 
\end{equation}
\end{theorem}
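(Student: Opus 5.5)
We will reduce the statement to a question about a single Brownian path under a size-biased measure, and then use a Kolmogorov--Erdős type integral test for the Bessel-3 process. For fixed $r$, write $Z_t^{(r)}$ for the truncated sum in \eqref{eq:path_localisation}. The first step is to check that $(Z_t^{(r)})_{t\ge r}$ is a supermartingale-like object that converges almost surely. We decompose the population at time $r$ and use the branching property. Conditionally on $\sF_r$, each particle $u\in\cN_r$ with $X_u(r)\in[a(r)\sqrt r,b(r)\sqrt r]$ starts an independent BBM. For that BBM we consider the "killed" derivative martingale $\sum_v X_v(t)\e^{-X_v(t)}\1_{\{X_v(s)>0\ \forall s\le t\}}$. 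This is a genuine non-negative martingale: by the many-to-one lemma it corresponds to Brownian motion killed at $0$, and $x\mapsto x$ is harmonic for it. Its almost sure limit equals the limit of the unrestricted derivative martingale on the event that the minimum stays positive. Since $I_t\to\infty$ by \eqref{eq:almost_sure_divergence}, the minimum over all times is finite. Hence, after taking $r\to\infty$, it suffices to compare, for each subtree rooted at time $r$, the martingale killed at $0$ with the one killed on leaving the tube $\{a(s)\sqrt s\le x\le b(s)\sqrt s\}$.

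Next we introduce the size-biased (spine) measure $\mathbb{Q}$ defined by $\d{\mathbb{Q}}/\d{\P}=Z^{+}_\infty/x$ on each subtree, where $Z^{+}$ is the killed derivative martingale. Under $\mathbb{Q}$ the spine is a Bessel-3 process $R$ started at $x$. The killed martingale $Z^{+}$ is uniformly integrable under $\P_x$ for $x>0$; this follows from the standard $L\log L$-type argument as in Lalley--Sellke/Aïdékon, since binary branching has all moments. The proportion of $Z^+_\infty$ carried by tube-respecting particles should then equal $\mathbb{Q}(R_s\in[a(s)\sqrt s,b(s)\sqrt s]\ \forall s\ge r\mid \text{spine})$. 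More precisely, define $Z^{\mathrm{tube}}_t$ for the subtree and observe:
\begin{enumerate}
\item $Z^{\mathrm{tube}}_t$ is a non-negative supermartingale, being the killed derivative martingale stopped on tube exit. It therefore converges almost surely, and $Z^{\mathrm{tube}}_\infty\le Z^+_\infty$.
\item $\E_x[Z^{\mathrm{tube}}_\infty] = x\,\mathbb{Q}_x(R \text{ stays in the tube on } [r,\infty))$. This requires uniform integrability of $Z^{\mathrm{tube}}$, which is inherited from domination by the killed martingale through a spine decomposition argument.
\end{enumerate}
Thus $\E[Z^+_\infty - Z^{\mathrm{tube}}_\infty]=x\,\mathbb{Q}_x(\text{exit})$. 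Since the difference is non-negative, if $\mathbb{Q}_x(\text{exit after } r)\to0$ uniformly as $r\to\infty$ for starting points inside the tube at time $r$, then $Z^{\mathrm{tube}}_\infty\to Z_\infty$ in $L^1$. By monotonicity in $r$ (the tube constraint on $[r,t]$ weakens as $r$ grows), this also holds almost surely.

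The last step, and the main obstacle, is the integral test for Bessel-3. We need that $\mathbb{Q}(R_s\ge a(s)\sqrt s$ eventually$)=1$ iff $\int a(s)/s\,\d s<\infty$; this is the Dvoretzky–Erdős lower-class test for the 3-dimensional Bessel process. We also need that $R_s\le b(s)\sqrt s$ eventually iff $\int b^3 \e^{-b^2/2}/s\,\d s<\infty$, the Kolmogorov–Erdős–Petrovsky upper-class test, where the extra power $b^3$ relative to Brownian motion comes from the Bessel-3 density. Under \eqref{eq:integrability_conditions}, the exit probability after time $r$ then tends to $0$, since the Bessel process starts inside the tube at time $r$ and the starting point is of order $\sqrt r$. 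One must check uniformity in the starting position, which is handled by scaling and a Markov argument at time $r$.

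Conversely, if \eqref{eq:integrability_conditions} fails, the tests give exit after any time $r$ with $\mathbb{Q}$-probability one. Then $\E[Z^{\mathrm{tube}}_\infty\mid\sF_r]=0$, so $Z^{\mathrm{tube}}_\infty=0$ almost surely for each $r$, which gives the second case of \eqref{eq:path_localisation}.

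The delicate points are establishing the uniform integrability under the stopped measure, and handling particles whose minimum dips below $0$ before time $r$. For the latter, we shift by $\inf_s I_s$, which is finite almost surely, and use the time-dependent tube $a(s)\sqrt s\gg$ shift as $s\to\infty$. Here the monotonicity $a\downarrow0$, $b\uparrow\infty$ ensures that small shifts do not affect the integral tests.
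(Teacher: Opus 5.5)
The case where \eqref{eq:integrability_conditions} fails is handled correctly. Conditionally on $\sF_r$, Fatou's lemma and the many-to-one formula bound the conditional mean of the inner limit by $\sum_u X_u(r)\e^{-X_u(r)}$ times the probability that a Bessel process started at time $r$ from $X_u(r)$ stays in the tube forever, and this probability vanishes. Since the tube lies in $[0,\infty)$, no truncation is needed there.

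The gap is in the other direction, at the step ``if $\mathbb{Q}_x(\text{exit after } r)\to 0$ uniformly for starting points inside the tube at time $r$, then $Z^{\mathrm{tube}}_\infty\to Z_\infty$ in $L^1$''. It fails for three reasons.
\begin{enumerate}
\item There is no such uniformity. A Bessel process started at time $r$ from a point of $[a(r)\sqrt r,2a(r)\sqrt r]$ hits $a(r)\sqrt r$ with probability at least $1/2$. It therefore leaves the tube whenever $s\mapsto a(s)\sqrt s$ is nondecreasing.
\item $Z_\infty$ has infinite mean, so ``$L^1$ convergence to $Z_\infty$'' is meaningless. After restarting at time $r$, the expected killed mass is $\E[\sum_{u\in\cN_r}X_u(r)^+\e^{-X_u(r)}]=\E[B_r^+]=\sqrt{r/(2\pi)}$. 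So even averaging over the time-$r$ configuration, the defect you actually control is $\E[B_r^+\,g_r(B_r)]$, with $g_r$ the exit probability after $r$, and it need not vanish. For $a(s)=(\log s)^{-2}$ and $b(s)=2\sqrt{\log\log s}$, which satisfy \eqref{eq:integrability_conditions}, the particles in $[a(r)\sqrt r,2a(r)\sqrt r]$ alone contribute order $a(r)^2\sqrt r\to\infty$.
\item Particles outside the tube at time $r$ contribute to $Z_\infty$ but are simply dropped from your comparison.
\end{enumerate}
As a result, the per-subtree identity $\E_x[Z^+_\infty-Z^{\mathrm{tube}}_\infty]=x\,\mathbb{Q}_x(\text{exit})$ is never turned into a statement about $Z_\infty$. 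Doing so from time $r$ would require showing $\sum_{u\in\cN_r}X_u(r)^+\e^{-X_u(r)}g_r(X_u(r))\to0$ in probability. That is a separate localisation statement in the spirit of \eqref{eq:scaling_derivative}, and you would have to prove it.

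The missing idea is to size-bias from time $0$ with killing at a fixed level, rather than restarting at time $r$.
\begin{itemize}
\item On $\{\inf_{s\ge0}I_s\ge-\ell\}$, whose probability tends to $1$, $Z_\infty$ is the limit of $\sum_{u\in\cN_t}(X_u(t)+\ell)\e^{-X_u(t)}\1_{\{\underline{X}_u(t)\ge-\ell\}}$, a martingale of constant mean $\ell$.
\item Since $W_t(1)\to0$, the inner limit in \eqref{eq:path_localisation} coincides there with the limit of the tube-restricted version of this martingale.
\item Given uniform integrability, that limit has mean $\ell\,\P_\ell(R_s-\ell\in[a(s)\sqrt s,b(s)\sqrt s]\ \forall s\ge r)$.
\item The Bessel process now starts from the fixed point $\ell$ at time $0$. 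The defect $\ell\,\P_\ell(\text{exit after } r)$ therefore tends to $0$ by the Dvoretsky--Erd\Haccent{o}s test, with no uniformity needed.
\item Monotonicity in $r$ then upgrades this to an almost sure statement.
\end{itemize}
Your closing remark about shifting by $\inf_s I_s$ points this way, but the shift must replace the restart at time $r$, not patch it.

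With uniform integrability of the truncated derivative martingale (known for branching Brownian motion, but an extra input), this would be a valid alternative to the paper's proof. The paper avoids uniform integrability altogether. It obtains mass conservation, $\lim_r\lim_t Z_{\tau_r\wedge t}=Z_\infty$ almost surely, from Chauvin's stopping-line theorem for the multiplicative martingale (Lemmas~\ref{lem:stopped_Z}--\ref{lem:two_step_limit}). Lemma~\ref{lem:0_1_law} then needs only Fatou upper bounds on the non-negative quantities $Z_{\tau_\infty}$ and $Z_\infty-Z_{\tau_\infty}$.
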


This theorem can be viewed as the analogue of the Dvoretsky--Erd\Haccent{o}s test for $3$-dimensional Bessel processes, which states that
\begin{equation}\label{eq:dvoretsky_erdos_test}
    \lim_{r \to \infty} \P(\forall s \ge r, R_s \in [a(s)\sqrt{s},b(s)\sqrt{s}]) = 1 \text{ or } 0,
\end{equation}
depending on whether~\eqref{eq:integrability_conditions} holds or not (see \eg \cite[points 14 and 15 \pp 163--164]{ItoMcKean1974}).
This is a key result for our proof.

\begin{remark}
    \begin{enumerate}
        \item An interesting instance of Theorem~\ref{th:path_localisation} is that, for $a(s) = (\log s)^{-a}$ and $b(s) = b \sqrt{2 \log \log s}$ with $a$ and $b$ constant, the limit~\eqref{eq:path_localisation} is either $Z_\infty$ or $0$ depending on whether $a > 1$ and $b > 1$ or not.
        \item The first condition in~\eqref{eq:integrability_conditions} also appears in~\cite[Theorem~1.2]{Hu2015}.
        In it, Hu proved in the framework of branching random walk that, almost surely on the non-extinction event,
        \begin{equation*}
            \limsup_{t \to \infty} a(t) \sqrt{t} W_t(1) =
            \begin{cases}
                0 \\
                \infty
            \end{cases}
            \quad \text{if and only if} \quad
            \int^\infty \frac{a(s)}{s} \d{s}
            \begin{cases}
                < \infty, \\
                = \infty.
            \end{cases}
        \end{equation*}
        However, our proof does not use this result.
        \item The choice of a binary branching is arbitrary.
        All of the convergences of this work seem to hold for general offspring distributions in $L^1$ (possibly leading to $Z_\infty = 0$).
    \end{enumerate}
\end{remark}

One of our motivations for seeking an almost sure path localisation result for the derivative martingale stems from Conjecture~1.6 of \cite{Flath2026}.
Let $N(t,x)$ denote the number of particles below $x$ at time $t$. It is proved in \cite{Flath2026} that, whenever $x=x(t)\to\infty$ and $x=o(t)$,
\begin{equation}\label{eq:nbp}
\frac{N(t,x)}
{(2x/t)\E[N(t,x)]} \xrightarrow[t\to\infty]{\P} Z_\infty.
\end{equation}
It is further shown that this convergence cannot hold almost surely if $x \leq t^{1/3}$, while it is conjectured that almost sure convergence holds when $x$ grows faster than $t^{1/2}$.

As observed in \cite{Flath2026}, most of the particles counted by $N(t,x)$ follow, up to some time scale $\gamma^\star$, the same path localisation as the particles carrying the derivative martingale mass. This makes it natural to seek an almost sure path localisation of this mass. We hope that Theorem~\ref{th:path_localisation} provides a useful ingredient for understanding when the convergence in \eqref{eq:nbp} can be strengthened from convergence in probability to almost sure convergence, by identifying the ancestral trajectories that carry the limiting mass and clarifying the role of exceptional paths.

More generally, when a quantity related to the front of the branching Brownian motion converges in probability, it is not always clear whether it can be reinforced into an almost sure convergence or not. It is known for instance that the Seneta--Heyde scaling convergence~\eqref{eq:seneta_heyde} does not hold almost surely (\cite[Theorem~1.2]{AidekonShi2014}); 
while to the best of our knowledge, the question of whether or not the convergence~\eqref{eq:scaling_derivative} holds almost surely is open.
We hope that Theorem \ref{th:path_localisation} can be a tool to help elucidate those questions.

The rest of this paper is organised as follows: in Section~\ref{sct:stopping_lines}, we introduce the notion of stopping lines and present results reminiscent of Doob's optional stopping theorem for standard stopping times.
In Section~\ref{sct:proof_of_theorem}, we state two lemmas concerning the derivative martingale stopped at appropriate stopping lines.
These lemmas illustrate the potential of stopping lines.
The proof of our main result then follows.

Throughout this work, for $x, y \in \R$, we write $x \wedge y = \min(x,y)$ and we work with the convention $\inf \varnothing = \infty$.

\section{Stopping lines}\label{sct:stopping_lines}

To capture information on the trajectories, it is convenient to use the framework of stopping lines, a generalisation of stopping times for branching processes.
In the case of branching Brownian motion, it was introduced by Chauvin~\cite{Chauvin1991}.
Denote by $b_u$ and $d_u$ the times of birth and death of particle $u$.
A stopping line is defined as a collection of stopping times $\tau^{(u)} \in [b_u,d_u) \cup \{\infty\}$ indexed by the particles%
\footnote{%
This definition slightly differs from that of Chauvin~\cite{Chauvin1991}, who defined the stopping time of $u$ as $\tau^{(u)} - b_u$ instead.%
}%
, which satisfies the line property, that is, there is at most one particle $u$ per line of descent such that $\tau^{(u)} < \infty$.
Given such a stopping line, we consider $\cL = \{u : \tau^{(u)} < \infty\}$ the set of ``stopped particles''.
By abuse of language, we invariably speak of ``stopping line'' to refer to either the family $\tau$ of stopping times, the associated set of particles $\mathcal{L}$, or the pair $(\mathcal{L},\tau)$.
We denote by $\sF_\cL$ the associated $\sigma$-algebra, \ie the trajectorial and genealogical information about particles $u \in \cL$ from time $0$ up to $\tau^{(u)}$.

A first example of a stopping line corresponds to the population of particles alive at time $t \ge 0$, that we denote by $(\cN_t,t)$.
More precisely, it is the stopping line associated to the stopping times $t^{(u)} \coloneq t$ if $u \in \cN_t$ and $\infty$ otherwise.
The associated $\sigma$-algebra $\sF_{\cN_t} = \sF_t$ is the one containing all information available until time $t$.

We consider the same partial order as Chauvin~\cite{Chauvin1991}, defined as follows.
We set $(\cL_1,\tau_1) \preceq (\cL_2,\tau_2)$ if and only if all particles $u \in \cL_2$ are in $\cL_1$ with $\tau_1^{(u)} \le \tau_2^{(u)}$, or are strict descendants of particles in $\cL_1$.
Note that this relation is compatible with the inclusion relation for $\sigma$-algebras: if $(\cL_1,\tau_1) \preceq (\cL_2,\tau_2)$, then $\sF_{\cL_1} \subseteq \sF_{\cL_2}$.
Consequently, any non-decreasing family of stopping lines $(\cL_t,\tau_t)_{t \ge 0}$ defines a filtration $(\sF_{\cL_t})_{t \ge 0}$.

This observation is a first step to establish properties analogous to Doob's optional stopping theorem, but in the framework of stopping lines.
In short, a martingale stopped at a non-decreasing family of appropriate stopping lines is still a martingale.
To formulate precise statements, consider a process of the form
\begin{equation*}
    M_t = \sum_{u \in \cN_t} f(X_u(t)),
\end{equation*}
where $f$ is some measurable function.
Let $(\cL_t,\tau_t)_{t \ge 0}$ be a non-decreasing family of stopping lines.
We consider this process \emph{stopped} at $\tau_t$, defined by
\begin{equation*}
    M_{\tau_t} = \sum_{u \in \cL_t} f(X_u(\tau_t^{(u)})).
\end{equation*}
The following proposition is in the same spirit as~\cite[Theorem~3.1]{Chauvin1991}, where Chauvin treated the multiplicative martingales.

\begin{proposition}\label{prop:optional_stopping_theorem}
    Assume that, for all $x \in \R$, the process $(M_t)_{t \ge 0}$ is an $((\sF_t)_{t \ge 0},\P_x)$-martingale and that, for any $t \ge 0$, there exists $T \ge 0$ such that $\cL_t \preceq \cN_T$.
    Then, for any $y \in \R$, the stopped process $(M_{\tau_t})_{t \ge 0}$ is a $((\sF_{\cL_t})_{t \ge 0},\P_y)$-martingale.
\end{proposition}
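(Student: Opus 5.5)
We need to show that $(M_{\tau_t})_{t\ge0}$ is a martingale with respect to $(\sF_{\cL_t})$. This involves three things: adaptedness, integrability, and the martingale identity $\E_y[M_{\tau_t}\mid\sF_{\cL_s}]=M_{\tau_s}$ for $s\le t$. The plan is to reduce everything to a single comparison between a stopping line and a fixed-time line $\cN_T$ that dominates it, and then use the branching Markov property along stopping lines, i.e. Chauvin's strong branching property.

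\textbf{Key identity.} The central claim is: for a stopping line $(\cL,\tau)$ with $\cL\preceq\cN_T$,
\begin{equation*}
\E_y\left[M_T\,\middle|\,\sF_{\cL}\right]=M_\tau \qquad \P_y\text{-a.s.}
\end{equation*}
To prove it, split the particles of $\cN_T$ according to their ancestor on $\cL$. Since $\cL\preceq\cN_T$, every $v\in\cN_T$ is either in $\cL$ with $\tau^{(v)}\le T$, or a strict descendant of some $u\in\cL$. I would first check that the line property, together with finite branching up to time $T$, ensures each $v\in\cN_T$ has exactly one such ancestor. Then write $M_T=\sum_{u\in\cL}\sum_{v\in\cN_T,\,u\preceq v} f(X_v(T))$. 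By the strong branching property at $\cL$, conditionally on $\sF_\cL$ the subtrees rooted at $(u,\tau^{(u)})$, $u\in\cL$, are independent branching Brownian motions. They have laws $\P_{\tau^{(u)},X_u(\tau^{(u)})}$, and by time homogeneity these are shifts of $\P_{X_u(\tau^{(u)})}$. Applying the hypothesis that $M$ is a $\P_x$-martingale at $x=X_u(\tau^{(u)})$ over the time span $T-\tau^{(u)}$ gives
\begin{equation*}
\E\left[\sum_{v} f(X_v(T))\,\middle|\,\sF_\cL\right]=f\left(X_u(\tau^{(u)})\right).
\end{equation*}
In particular, taking the span to be $0$ shows $\E_x[\,\cdot\,]$ of the sum over $\cN_0$ is just $f(x)$. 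Summing over $u\in\cL$ yields the identity.

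\textbf{Exchanging sum and conditional expectation.} This step is where care is needed, because $\cL$ may be infinite (though it is finite here, since $\cL\preceq\cN_T$ and $\cN_T$ is a.s. finite) and $f$ is signed. I would handle it by applying the argument to $f^+$ and $f^-$ separately. This uses the martingale property through conditional Fubini for nonnegative terms, with $\E_x\sum_{\cN_t}|f|<\infty$ implied by the integrability of $M$. From this, integrability of $M_{\tau}$ follows, since $\E_y|M_\tau|\le\E_y\sum_{\cN_T}|f(X_v(T))|<\infty$. Adaptedness of $M_{\tau_t}$ to $\sF_{\cL_t}$ is immediate from the definition of $\sF_{\cL}$.

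\textbf{Concluding the martingale property.} Given $s\le t$, choose $T$ with $\cL_t\preceq\cN_T$; by monotonicity we also have $\cL_s\preceq\cL_t\preceq\cN_T$. The key identity applied to both lines, together with the tower property and $\sF_{\cL_s}\subseteq\sF_{\cL_t}$, gives
\begin{equation*}
\E_y\left[M_{\tau_t}\,\middle|\,\sF_{\cL_s}\right]=\E_y\left[\E_y\left[M_T\,\middle|\,\sF_{\cL_t}\right]\,\middle|\,\sF_{\cL_s}\right]=\E_y\left[M_T\,\middle|\,\sF_{\cL_s}\right]=M_{\tau_s}.
\end{equation*}

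\textbf{Main obstacle.} The main obstacle is a rigorous statement of the strong branching property at a stopping line with random, particle-dependent times. In particular, one must verify that, conditionally on $\sF_\cL$, the post-$\tau^{(u)}$ evolutions are independent with the claimed laws. I would invoke Chauvin's strong branching property~\cite{Chauvin1991}, adapted to our convention $\tau^{(u)}\in[b_u,d_u)$. Alternatively, one could approximate each $\tau^{(u)}$ from above by dyadic stopping times, use the simple Markov property at fixed times, and pass to the limit using continuity of paths and of $t\mapsto\E_x M_t$ (which is constant).
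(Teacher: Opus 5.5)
Your proposal is correct and follows the paper's proof. Both prove $\E_y[M_T\mid\sF_{\cL}]=M_\tau$ for $\cL\preceq\cN_T$ by splitting $\cN_T$ according to ancestors on $\cL$ and applying Chauvin's branching property with $\E_x[M_{T-r}]=f(x)$, and both then conclude by the tower property using $\cL_s\preceq\cL_t\preceq\cN_T$.

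Your extra integrability bookkeeping, which the paper leaves implicit, is sound. In particular, the claim that $\E_x\sum_{u\in\cN_t}|f(X_u(t))|<\infty$ follows from $\E_x|M_t|<\infty$ does hold here: restrict to the event of no branching before time $t$ and use the many-to-one formula to get $\E_x\sum_{u\in\cN_t}|f(X_u(t))|\le\e^{2\lambda t}\E_x|M_t|$.
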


\begin{proof}
    We can assume without loss of generality that $y = 0$.
    Let $s \ge 0$ and $T \ge 0$ such that $\cL_s \preceq \cN_T$.
    We can rewrite
    \begin{equation*}
        \E \left[ M_T \middle| \sF_{\cL_s} \right] = \sum_{u \in \cL_s} \E \left[ \sum_{v \in \cN_T : v \ge u} f(X_v(T)) \middle| \sF_{\cL_s} \right] = \sum_{u \in \cL_s} \phi(\tau_s^{(u)},X_u(\tau_s^{(u)})),
    \end{equation*}
    where, by the branching property of Chauvin~\cite[Proposition~2.1]{Chauvin1991},
    \begin{equation*}
        \phi(r,x) = \E_x \left[ \sum_{v \in \cN_{T-r}} f(X_v(T-r)) \right] = \E_x \left[ M_{T-r} \right] = \E_x \left[ M_0 \right] = f(x).
    \end{equation*}
    Hence, $\E[M_T|\sF_{\cL_s}] = M_{\tau_s}$.
    Now, for $t \ge s \ge 0$, we can choose $T \ge 0$ such that $\cL_s \preceq \cL_t \preceq \cN_T$ and rewrite
    \begin{equation*}
        \E \left[ M_{\tau_t} \middle| \sF_{\cL_s} \right] = \E \left[ \E \left[ M_T \middle| \sF_{\cL_t} \right] \middle| \sF_{\cL_s} \right] = \E \left[ M_T \middle| \sF_{\cL_s} \right] = M_{\tau_s},
    \end{equation*}
    which concludes.
\end{proof}

In Proposition~\ref{prop:optional_stopping_theorem}, the assumption that any $\cL_t$ is anterior to some $\cN_T$ ensures that no mass escapes from one stopping line to another.
If we only assume that no \emph{negative} mass escapes, then the martingale stopped at the stopping lines becomes a supermartingale.
This is the subject of the following proposition.

\begin{proposition}\label{prop:supermartingale}
    Consider $\cZ$ the stopping line associated with the stopping times
    \begin{equation*}
        \zeta^{(u)} \coloneq
        \begin{cases}
            \infty & \text{if $\zeta^{(v)} < \infty$ for some strict ancestor $v$ of $u$}, \\
            \inf\{t \ge 0 : f(X_u(t)) \le 0\} & \text{otherwise}.
        \end{cases}
    \end{equation*}
    Assume that, for all $x \in \R$, the process $(M_t)_{t \ge 0}$ is an $((\sF_t)_{t \ge 0},\P_x)$-martingale and that, for any $t \ge 0$, we have $\cL_t \preceq \cZ$.
    Then, for any $y \in \R$, the stopped process $(M_{\tau_t})_{t \ge 0}$ is a non-negative $((\sF_{\cL_t})_{t \ge 0},\P_y)$-supermartingale and therefore converges $\P_y$-almost surely as $t \to \infty$.
\end{proposition}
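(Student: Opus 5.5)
We need to prove Proposition~\ref{prop:supermartingale}. Assume without loss of generality $y=0$, and fix $t \ge s \ge 0$. The plan is to compare $M_{\tau_t}$ with $M_{\tau_s}$ by approximating the line $\cL_t$ from below by lines that sit before some deterministic time $T$. Then Proposition~\ref{prop:optional_stopping_theorem} applies to these truncated lines, and the limit $T\to\infty$ is handled by Fatou's lemma, using non-negativity.

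\textbf{Non-negativity.} Since $\cL_t \preceq \cZ$, every $u \in \cL_t$ satisfies one of two conditions. Either $u \in \cZ$ with $\tau_t^{(u)} \le \zeta^{(u)}$, or $u$ is a strict descendant of some particle of $\cZ$. In the second case, note that $\cZ$ need not cover every line of descent, since particles whose $f$ stays positive are never stopped. So $\cL_t \preceq \cZ$ should be read as saying that stopped particles of $\cL_t$ are stopped no later than $\cZ$ along their line. The reading we need is that $\tau_t^{(u)} \le \zeta$ along the ancestral line of $u$. Then $f(X_u(r))>0$ for $r<\tau_t^{(u)}$, and by right-continuity, or by taking the infimum, $f(X_u(\tau_t^{(u)})) \ge 0$. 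I will make this precise from the definition of $\preceq$; this bookkeeping is fiddly but elementary. It gives $M_{\tau_t} \ge 0$.

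\textbf{Truncation.} For $T \ge 0$, define the line $\cL_t^T$ by stopping $u \in \cL_t$ at $\tau_t^{(u)}$ if $\tau_t^{(u)} \le T$. If instead the line of descent has not been stopped by time $T$, we stop the particles alive at time $T$ there. Then $\cL_t^T \preceq \cN_T$, and the family is non-decreasing in $t$ for fixed $T$: the order on $(\cL_t)$ is preserved when both lines are cut at $T$. By Proposition~\ref{prop:optional_stopping_theorem} applied to $t' \mapsto \cL_{t'}^T$,
\[
\E\bigl[M_{\tau_t^T}\,\big|\,\sF_{\cL_s^T}\bigr]=M_{\tau_s^T}.
\]
We now pass $T\to\infty$. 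There are two pieces to handle.
\begin{enumerate}
\item Almost surely $M_{\tau_t^T} \to M_{\tau_t}$ along the stopped part, and the contribution of unstopped particles at time $T$ is non-negative. It is non-negative because those particles have not yet hit $\{f\le0\}$, since they lie before $\cZ$.
\item The stopped part is monotone in $T$.
\end{enumerate}
This gives $M_{\tau_t} \le \liminf_T M_{\tau_t^T}$, and more precisely $M_{\tau_t^T} = (\text{stopped part}) + (\text{non-negative remainder})$. Similarly, $M_{\tau_s^T}$ converges to $M_{\tau_s}$ plus a limit of non-negative remainders.

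The main obstacle is that the conditioning $\sigma$-algebra $\sF_{\cL_s^T}$ also varies with $T$. To handle it, I will condition on $\sF_{\cL_s}$ instead, using $\sF_{\cL_s}\cap\{\text{relevant event}\}$ together with the tower property. For $A\in\sF_{\cL_s}$, I first intersect with the event that all particles of $\cL_s$ are stopped before $T$ on a chosen finite subset, and then let $T\to\infty$. The key step is to show
\[
\E\bigl[M_{\tau_t}\1_A\bigr]\le \E\bigl[M_{\tau_s}\1_A\bigr].
\]
I will prove this by writing $M_{\tau_s}=\lim_T(\text{stopped part of }M_{\tau_s^T})$, which holds by monotone convergence. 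Then I will check that the particles of $\cL_s$ stopped by time $T$ have their $\cL_t$-descendants' contribution controlled: applying Proposition~\ref{prop:optional_stopping_theorem} from each such particle via the strong branching property (Chauvin) shows the expected mass below each is at most $f$ of its position. This uses Fatou again and the non-negativity of the cut remainders. Summing gives the supermartingale inequality.

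Finally, a non-negative supermartingale converges almost surely by Doob's theorem.
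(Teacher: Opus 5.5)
Your argument is correct in substance, but it is organised differently from the paper's.

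\textbf{The paper's route.} The paper never splits $M_{\tau_t}$ along $\cL_s$. It applies Proposition~\ref{prop:optional_stopping_theorem} to the truncated lines $\cL_t\wedge n_2$ and $\cL_s\wedge n_1$ with $n_1<n_2$. It lets $n_2\to\infty$ and then $n_1\to\infty$, so the moving conditioning $\sigma$-algebra you identified as the obstacle is replaced by the limit $\sF_{\cL_s\wedge\infty}=\sigma(\sF_{\cL_s\wedge n}:n\ge0)$. It then writes $\liminf_n M_{\tau_t\wedge n}=M_{\tau_t}+\Delta_t$. The escaped mass satisfies $\Delta_t\ge\Delta_s$, because unstopped particles have $f>0$ and $\tau_s\le\tau_t\le\zeta$. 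Since $\Delta_s$ is $\sF_{\cL_s\wedge\infty}$-measurable, it can be cancelled, and the inequality is finally projected onto $\sF_{\cL_s}$. The branching property is thus only ever used at lines anterior to some $\cN_T$.

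\textbf{Your route.} You isolate the one-particle bound $\E_{\gamma,x}[M_\tau]\le f(x)$ for a single line $\tau\preceq\cZ$. This follows from Proposition~\ref{prop:optional_stopping_theorem} with trivial initial $\sigma$-algebra, plus monotone convergence of the stopped part, since the cut remainder is non-negative. You then apply the strong branching property at $\cL_s$ itself and sum over $u\in\cL_s$ by conditional monotone convergence.

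\textbf{What each buys.} Your approach is more transparent and avoids the escaped-mass bookkeeping. The price is twofold:
\begin{itemize}
\item you invoke Chauvin's branching property at a line that need not be anterior to any generation;
\item you must check that, conditionally on $\sF_{\cL_s}$, the part of $\cL_t$ inside the subtree of $u$ is a stopping line of that subtree preceding its own $\cZ$. This is automatic for lines defined through ancestral paths, as all lines in the paper are.
\end{itemize}
Once you condition directly on $\sF_{\cL_s}$, the truncation of $\cL_s$ at $T$ and the ``intersect with a finite subset'' step are superfluous and can be dropped.

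\textbf{Direction of $\preceq$.} You first stated $\cL_t\preceq\cZ$ with the roles reversed. By definition, every $u\in\cZ$ lies in $\cL_t$ with $\tau_t^{(u)}\le\zeta^{(u)}$, or strictly descends from a particle of $\cL_t$; this gives directly the consequence you use. It is also the correct direction of $\cL_s\preceq\cL_t$ that makes your sum over $u\in\cL_s$ exhaust $M_{\tau_t}$: every particle of $\cL_t$ is, or descends from, a particle of $\cL_s$.
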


In the proof as well as in the remainder, given two stopping lines $(\mathcal{L}_1,\tau_1)$ and $(\mathcal{L}_2,\tau_2)$, it is convenient to denote by $(\cL_1 \wedge \cL_2, \tau_1 \wedge \tau_2)$ the stopping line associated to the stopping times
\begin{equation*}
    (\tau_1 \wedge \tau_2)^{(u)} \coloneq
    \begin{cases}
        \infty & \text{if $(\tau_1 \wedge \tau_2)^{(v)} < \infty$ for some strict ancestor $v$ of $u$}, \\
        \tau_1^{(u)} \wedge \tau_2^{(u)} & \text{otherwise}.
    \end{cases}
\end{equation*}

\begin{proof}
    By the Proposition~\ref{prop:optional_stopping_theorem}, for any natural numbers $n_1 < n_2$,
    \begin{equation*}
        \E_y \left[ M_{\tau_t \wedge n_2} \middle| \sF_{\cL_s \wedge n_1} \right] = M_{\tau_s \wedge n_1}.
    \end{equation*}
    The assumption that any stopping line is anterior to $\cZ$ ensures that $M_{\tau_t \wedge n_2} \ge 0$.
    By Fatou's lemma, letting $n_2 \to \infty$ and then $n_1 \to \infty$, we obtain
    \begin{equation}\label{eq:optional_stopping_theorem_fatou}
        \E_y \left[ M_{\tau_t \wedge \infty} \middle| \sF_{\cL_s \wedge \infty} \right] \le M_{\tau_s \wedge \infty},
    \end{equation}
    where $M_{\tau_t \wedge \infty} \coloneq \liminf_{n \to \infty} M_{\tau_t \wedge n}$ and $\sF_{\cL_s \wedge \infty} \coloneq \sigma(\sF_{\cL_s \wedge n} : n \ge 0)$.
    But $M_{\tau_t \wedge \infty} = M_{\tau_t} + \Delta_t$, where
    \begin{equation*}
        \Delta_t \coloneq \liminf_{n \to \infty} \sum_{u \in \cN_n : \tau_t^{(u)} > n} f(X_u(n)) \ge \Delta_s,
    \end{equation*}
    since $\zeta \ge \tau_t \ge \tau_s$.
    Moreover, note that $\Delta_s$ is $\sF_{\cL_s \wedge \infty}$-measurable.
    Therefore, the inequality~\eqref{eq:optional_stopping_theorem_fatou} becomes
    \begin{equation*}
        \E_y \left[ M_{\tau_t} \middle| \sF_{\cL_s \wedge \infty} \right] \le M_{\tau_s}.
    \end{equation*}
    Since $\sF_{\cL_s} \subseteq \sF_{\cL_s \wedge \infty}$, taking the expectation of the above inequality conditionally on $\sF_{\cL_s}$, we conclude the proof.
\end{proof}

\section{Proof of the main result}\label{sct:proof_of_theorem}

To prove our main result, we use four lemmas, that we believe to be of independent interest.
The first one tells that the derivative martingale, once stopped at appropriate stopping lines, converges almost surely to the same limit.

\begin{lemma}\label{lem:stopped_Z}
    Let $(\cL_t,\tau_t)_{t \ge 0}$ be a non-decreasing family of stopping lines such that, for any $x \in \R$, the following two conditions hold $\P_x$-almost surely:
    \begin{enumerate}
        \item\label{it:cutting_lines} any stopping line is anterior to some generation: for any $t \ge 0$, there exists $s \ge 0$ such that $(\cL_t,\tau_t) \preceq (\cN_s,s)$.
        \item\label{it:divergent_lines} any generation is anterior to some stopping line: for any $s \ge 0$, there exists $t \ge 0$ such that $(\cN_s,s) \preceq (\cL_t,\tau_t)$.
    \end{enumerate}
    Then, the stopped martingale $Z_{\tau_t}$ converges almost surely to $Z_\infty$ as $t \to \infty$.
\end{lemma}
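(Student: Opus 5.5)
The plan is to reduce to a \emph{non-negative, uniformly integrable} truncated derivative martingale. For that martingale, the optional stopping argument of Proposition~\ref{prop:optional_stopping_theorem} combined with L\'evy's upward theorem gives almost sure convergence along the stopping lines. The truncation is then removed on events of probability tending to one.

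\textbf{Truncation.} Fix $k>0$ and let $\cZ_k$ be the stopping line of first hitting of $-k$: each particle is stopped the first time it, or rather its first ancestor to do so, reaches $-k$. Set $f_k(x)=(x+k)\e^{-x}$. The unkilled process $\sum_{u\in\cN_t} f_k(X_u(t)) = Z_t + kW_t(1)$ is a $\P_x$-martingale for every $x$. The killed process
\[
Z^{(k)}_t=\sum_{u\in\cN_t}(X_u(t)+k)\e^{-X_u(t)}\1_{\{\forall s\le t,\,X_u(s)>-k\}}
\]
is the classical truncated derivative martingale. It is a non-negative martingale that converges almost surely and in $L^1$ to a limit $Z^{(k)}_\infty$; this is the standard Lalley--Sellke fact, proved via the many-to-one lemma and a Doob transform to a $3$-dimensional Bessel process. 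I will quote it rather than reprove it.

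On the event $A_k=\{\inf_{t\ge0}I_t>-k\}$, no particle is ever killed. Hence on $A_k$ we have $Z^{(k)}_t=Z_t+kW_t(1)$. Since $W_t(1)\to0$ almost surely, this gives $Z^{(k)}_\infty=Z_\infty$ on $A_k$. Moreover $\P(A_k)\to1$, because $\inf_t I_t>-\infty$ almost surely by~\eqref{eq:almost_sure_divergence}.

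\textbf{Stopped truncated martingale.} Consider the stopping lines $\cL_t\wedge\cZ_k$ and
\[
M^{(k)}_t \coloneq \sum_{u\in\cL_t\wedge\cZ_k} f_k\bigl(X_u(\tau^{(u)})\bigr)\1_{\{X_u(\tau^{(u)})>-k\}}.
\]
By hypothesis~\ref{it:cutting_lines}, $\cL_t\wedge\cZ_k\preceq\cN_T\wedge\cZ_k$ for some $T$. Repeating the proof of Proposition~\ref{prop:optional_stopping_theorem} with the branching property gives $\E[Z^{(k)}_T\mid\sF_{\cL_t\wedge\cZ_k}]=M^{(k)}_t$. The key input is that a particle at $x>-k$ has killed descendants of mean mass $f_k(x)$. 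One technical point: $T$ is random in~\ref{it:cutting_lines}. I would handle this by truncating $\tau_t$ at deterministic times $n$ and using monotonicity, as in the proof of Proposition~\ref{prop:supermartingale}.

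Uniform integrability then upgrades this identity to
\[
M^{(k)}_t=\E\bigl[Z^{(k)}_\infty\mid\sF_{\cL_t\wedge\cZ_k}\bigr].
\]
By L\'evy's upward theorem, $M^{(k)}_t$ converges almost surely to $\E[Z^{(k)}_\infty\mid\sF'_\infty]$, where $\sF'_\infty=\sigma\bigl(\bigcup_t\sF_{\cL_t\wedge\cZ_k}\bigr)$. Hypothesis~\ref{it:divergent_lines} gives $\sF_{\cN_s\wedge\cZ_k}\subseteq\sF'_\infty$ for every $s$. Since $Z^{(k)}_\infty$ is measurable with respect to $\sigma\bigl(\bigcup_s\sF_{\cN_s\wedge\cZ_k}\bigr)$, it is $\sF'_\infty$-measurable, and therefore $M^{(k)}_t\to Z^{(k)}_\infty$ almost surely.

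\textbf{Removing the truncation.} On $A_k$ we have $M^{(k)}_t=Z_{\tau_t}+kW_{\tau_t}$, where $W_{\tau_t}=\sum_{u\in\cL_t}\e^{-X_u(\tau_t^{(u)})}$. Let $m_t=\min_{u\in\cL_t}X_u(\tau_t^{(u)})$. Hypothesis~\ref{it:divergent_lines} together with $\inf_{r\ge s}I_r\to\infty$ (from~\eqref{eq:almost_sure_divergence}) gives $m_t\to\infty$. Hence on $A_k$,
\[
W_{\tau_t}\le \frac{M^{(k)}_t}{m_t+k}\longrightarrow0,
\]
and so $Z_{\tau_t}\to Z^{(k)}_\infty=Z_\infty$ almost surely on $A_k$. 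Letting $k\to\infty$ concludes.

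The main obstacle is the optional stopping identity for the killed martingale along lines that are only almost surely, not deterministically, anterior to some $\cN_T$. Together with the measurability claim $Z^{(k)}_\infty\in\sF'_\infty$, this is where hypotheses~\ref{it:cutting_lines} and~\ref{it:divergent_lines} are genuinely used.
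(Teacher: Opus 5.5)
Your argument is correct, but it takes a different route from the paper. The paper works with the multiplicative martingale $\Pi_t=\prod_{u\in\cN_t}w(X_u(t))$, which is bounded in $[0,1]$. It quotes Chauvin's Theorem~3.4 to get $\Pi_{\tau_t}\to\e^{-C^*Z_\infty}$ along the lines. It then converts back to $Z_{\tau_t}$ using the Lalley--Sellke tail $1-w(x)=(1+\vep(x))C^*x\e^{-x}$ and the divergence of the minimal position on $\cL_t$ (your $m_t\to\infty$).

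Your proof is essentially a hands-on version of Chauvin's theorem, namely optional stopping along the lines plus L\'evy's upward theorem, run on the killed martingale $Z^{(k)}$ instead of on $\Pi$. Boundedness is replaced by uniform integrability of $Z^{(k)}$. That is a standard but genuine input, typically proved with a spine decomposition whose spine is a Bessel-$3$ process. You must also handle yourself the random domination time in~\ref{it:cutting_lines} and the null-set measurability in~\ref{it:divergent_lines}, which Chauvin's theorem packages.

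For the random domination time you do not need the Fatou/monotonicity device of Proposition~\ref{prop:supermartingale}; uniform integrability gives exact identities instead. The killed mass on $\cL_t\wedge n\wedge\cZ_k$ equals $\E[Z^{(k)}_\infty\mid\sF_{\cL_t\wedge n\wedge\cZ_k}]$ exactly, and it coincides with $M^{(k)}_t$ once $n$ exceeds the random time. Hence $M^{(k)}_t=\E[Z^{(k)}_\infty\mid\mathcal{G}_t]$ with the non-decreasing filtration $\mathcal{G}_t=\bigvee_n\sF_{\cL_t\wedge n\wedge\cZ_k}$, in which you then apply L\'evy's theorem.

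What your route buys is independence from the F--KPP travelling wave: no $w$, no $C^*$ and no sharp tail asymptotics are needed. It also gives a template for the extension to more general martingales that the paper raises right after the statement. The paper's route buys brevity by outsourcing all stopping-line technicalities to an existing result about bounded martingales.
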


Although we believe that the tools developed in Section~\ref{sct:stopping_lines} are useful to prove Lemma~\ref{lem:stopped_Z}, here we present a short proof that does not use them.
Instead, we prove that it is a consequence of Chauvin~\cite[Theorem~3.4]{Chauvin1991}.
It would be interesting to extend Lemma~\ref{lem:stopped_Z} to martingales more general than the derivative martingale.

\begin{proof}
    We have seen in~\eqref{eq:travelling_wave} that the following function is a F--KPP travelling wave at critical speed (unique up to translation),
    \begin{equation*}
        w(x) = \lim_{t \to \infty} \P(I_t \ge (3/2) \log t - x).
    \end{equation*}
    By Lalley and Sellke~\cite[Equation~(25)]{LalleySellke1987},
    \begin{equation*}
        \Pi_t \coloneq \prod_{u \in \cN_t} w(X_u(t)) \xrightarrow[t \to \infty]{} \e^{-C^* Z_\infty}, \quad \text{almost surely},
    \end{equation*}
    where $C^* > 0$.
    The process $\Pi_t$ is known in the literature as a \emph{multiplicative martingale}.

    Our conditions~\ref{it:cutting_lines} and~\ref{it:divergent_lines} are equivalent to the conditions~(3.3) and~(3.9) of Chauvin~\cite{Chauvin1991}.
    By Theorem~3.4 of the latter, they ensure that we also have
    \begin{equation}\label{eq:stopped_multiplicative}
        \Pi_{\tau_t} \coloneq \prod_{u \in \cL_t} w(X_u(\tau_t^{(u)})) \xrightarrow[t \to \infty]{} \e^{-C^* Z_\infty}, \quad \text{almost surely}.
    \end{equation}
    But, by~\cite[Equation~(22)]{LalleySellke1987},
    \begin{equation*}
        1 - w(x) = (1 + \vep(x)) C^* x \e^{-x},
    \end{equation*}
    where $\vep(x)$ is some function that converges to $0$ as $x \to \infty$.
    Therefore,
    \begin{equation*}
        \Pi_{\tau_t} = \exp \sum_{u \in \cL_t} \log(1 - (1 + \vep(X_u(\tau_t^{(u)}))) C^* X_u(\tau_t^{(u)}) \e^{-X_u(\tau_t^{(u)})}).
    \end{equation*}
    Since $\cL_t$ is posterior to any given generation for $t$ sufficiently large and since the minimal position of the branching Brownian motion diverges almost surely to $\infty$, it follows that
    \begin{equation}\label{eq:estimate_multiplicative}
        \Pi_{\tau_t} = \e^{-(1+\vep_t)C^* Z_{\tau_t}}, \quad \text{almost surely},
    \end{equation}
    where $\vep_t$ is some process that converges almost surely to $0$ as $t \to \infty$.
    The conclusion follows from~\eqref{eq:stopped_multiplicative} and~\eqref{eq:estimate_multiplicative}.  
\end{proof}

Now, to fit into the framework of Theorem~\ref{th:path_localisation}, let $a(t)$ and $b(t)$ be two non-negative functions such that $a(t) \downarrow 0$ and $b(t) \uparrow \infty$ as $t \uparrow \infty$.
Consider the stopping line $(\cL_r,\tau_r)$ associated to the stopping times
\begin{equation}\label{eq:def_stopping_line}
    \tau_r^{(u)} \coloneq
    \begin{cases}
        \infty & \text{if $\tau_r^{(v)} < \infty$ for some strict ancestor $v$ of $u$}, \\
        \inf\{s \ge r : X_u(s) \notin (a(s)\sqrt{s},b(s)\sqrt{s})\} & \text{otherwise}.
    \end{cases}
\end{equation}

Recall that, with the notation introduced in Section~\ref{sct:stopping_lines}, the derivative martingale stopped at $(\cL_r,\tau_r)$ (see Figure~\ref{fig:bbm_stopped}) is
\begin{equation}\label{eq:def_stopped_derivative_martingale}
    Z_{\tau_r} = \sum_{u \in \cL_r} X_u(\tau_r^{(u)}) \e^{-X_u(\tau_r^{(u)})}.
\end{equation}
We first verify that this stopped martingale admits an almost sure limit.

\begin{lemma}\label{lem:supermartingale}
    The stopped martingale $Z_{\tau_r}$ converges almost surely to some random variable $Z_{\tau_\infty}$ as $r \to \infty$.
\end{lemma}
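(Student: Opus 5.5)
The plan is to apply Proposition~\ref{prop:supermartingale} with $f(x) = x\e^{-x}$, so that $M_t = Z_t$, indexing the family of stopping lines by $r$. Under each $\P_x$ the process $Z_t$ is a martingale, since by the branching property and translation it equals $\sum_{u} (x + X'_u) \e^{-x - X'_u} = \e^{-x}(Z'_t + x W'_t(1))$, which is a combination of the derivative and critical additive martingales of a copy started at $0$. Here $\cZ$ is the stopping line of first hitting of $(-\infty,0]$: $\zeta^{(u)}$ is the first time the ancestral path of $u$ is $\le 0$, and it is infinite if some ancestor has already hit. Once both hypotheses are checked, Proposition~\ref{prop:supermartingale} gives that $(Z_{\tau_r})_{r\ge0}$ is a non-negative supermartingale with respect to $(\sF_{\cL_r})$, hence converges almost surely. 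This is exactly the claim.

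The first step is to check that $r \mapsto (\cL_r,\tau_r)$ is non-decreasing for $\preceq$. Fix $r \le r'$ and take $u \in \cL_{r'}$, with stopping time $\tau_{r'}^{(u)} \ge r'$. Look at the ancestral path of $u$ on $[r,\infty)$. If it exits the tube $(a(s)\sqrt s, b(s)\sqrt s)$ at some time $s_0 \in [r, \tau_{r'}^{(u)}]$, then the ancestor $v$ of $u$ alive at $s_0$ belongs to $\cL_r$ with $\tau_r^{(v)} = s_0$. Either $v = u$ with $\tau_r^{(u)} \le \tau_{r'}^{(u)}$, or $u$ is a strict descendant of $v$. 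Otherwise the path stays in the open tube on $[r, \tau_{r'}^{(u)})$. Since $\tau_{r'}^{(u)}$ is itself an exit time after $r'\ge r$, we get $\tau_r^{(u)} = \tau_{r'}^{(u)}$, and no strict ancestor is stopped, so $u\in\cL_r$. Either way $\cL_r \preceq \cL_{r'}$. The line property of each $\cL_r$ holds by construction of~\eqref{eq:def_stopping_line}.

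The second step is to check that $\cL_r \preceq \cZ$, and this is where the lower boundary matters. Take $u \in \cZ$ with $\zeta^{(u)} < \infty$, so $X_u(\zeta^{(u)}) \le 0$. If $\zeta^{(u)} < r$, then no ancestor of $u$ is stopped in $\cL_r$ before time $r$. All of $\cL_r$ lives after time $r$, so it sits at times after $\zeta^{(u)}$, but that is the wrong direction for the order, so this case needs care. For $s \ge r$ we have $a(s)\sqrt{s} \ge 0$, so any path that reaches $(-\infty,0]$ after time $r$ has already left the open tube, at or before that time. Hence, for $\zeta^{(u)} \ge r$, either $u \in \cL_r$ with $\tau_r^{(u)} \le \zeta^{(u)}$, or $u$ descends from a stopped particle.

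The case $\zeta^{(u)}<r$ is the main obstacle: particles that dip below $0$ before time $r$ break the comparison. I would try to remove it by modifying the definition of $\tau_r$ so that it also stops at the first entry into $(-\infty,0]$ on $[0,r]$. Equivalently, I would replace $\cL_r$ by $\cL_r\wedge\cZ$ in the sense of the paper's $\wedge$ notation. I would then argue that this changes nothing asymptotically: by~\eqref{eq:almost_sure_divergence}, almost surely only finitely many particles ever go below $0$, all at bounded times. So for $r$ large the two stopped sums differ only by terms coming from finitely many particles stopped at bounded times, and these terms are fixed once $r$ exceeds those times. Almost sure convergence of the modified supermartingale, given by Proposition~\ref{prop:supermartingale} under $\P_y$ for every $y$, then transfers to $Z_{\tau_r}$.

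Alternatively, I would apply Proposition~\ref{prop:supermartingale} under $\P_{r_0,x}$ to each subtree rooted at time $r_0$ beyond the last time any particle is below $0$. The lines $\cL_r$ for $r \ge r_0$ are then anterior to each subtree's $\cZ$. Summing over the finitely many particles alive at time $r_0$ gives almost sure convergence conditionally on $\sF_{r_0}$, and this yields the unconditional almost sure statement.
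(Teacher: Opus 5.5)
You set up Proposition~\ref{prop:supermartingale} correctly. Your monotonicity check and your treatment of the case $\zeta^{(u)}\ge r$ are fine, and you have identified the real obstacle. However, neither of your fixes removes it.

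\textbf{First fix.} Under $\P=\P_0$ the root starts at $0$, so $f(X(0))=0\le 0$ and $\cZ$ is just the root at time $0$. Hence $\cL_r\wedge\cZ$ is the root at time $0$ for every $r$, and your modified supermartingale is identically $0$. More generally, the comparison you claim is false. If $u\in\cZ$, the sum along $\cL_r\wedge\cZ$ records only $f(X_u(\zeta^{(u)}))\le 0$. By contrast, $Z_{\tau_r}$ records $\sum_{v\in\cL_r,\, v\succeq u} f(X_v(\tau_r^{(v)}))$, the mass of the whole subtree of $u$, which keeps changing with $r$. That only finitely many particles are stopped, at bounded times, does not make the difference eventually constant.

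\textbf{Second fix.} The time $r_0$ ``beyond the last time any particle is below $0$'' depends on the entire future, so it is not a stopping time. Conditionally on $\sF_{r_0}$ and on the event defining $r_0$, the subtrees are not distributed as $\P_{r_0,x}$: they are conditioned never to enter $(-\infty,0]$. So Proposition~\ref{prop:supermartingale} cannot be invoked there. If instead $r_0$ is deterministic, a subtree under $\P_{r_0,x}$ can hit $(-\infty,0]$ during $[r_0,r)$ and then come back. Then $\cL_r\preceq\cZ$ fails again, for exactly the reason you identified.

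\textbf{What is missing.} You need to combine your two ideas in the right order, which is what the paper does.
\begin{enumerate}
\item For every deterministic integer $n$, work with $\tilde{Z}^{(n)}_t=\sum_{u\in\cN_t}X_u(t)\e^{-X_u(t)}\1_{\{X_u(n)>0\}}$ for $t\ge n$, that is, the subtrees rooted at positive positions at time $n$.
\item Let $\cZ$ be the first hitting of $(-\infty,0]$ after time $n$. Apply Proposition~\ref{prop:supermartingale} to the lines $\cL_r\wedge\cZ$, $r\ge n$, which are anterior to $\cZ$ by construction.
\item This gives almost sure convergence of $\tilde{Z}^{(n)}_{\tau_r\wedge\zeta}$ as $r\to\infty$. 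It holds simultaneously for all $n$, since there are only countably many.
\item Only then choose, via \eqref{eq:almost_sure_divergence}, the random integer $n_0$ after which all particles are positive. For $n=n_0$ no particle is ever stopped by $\cZ$ and all indicators equal $1$. Hence $Z_{\tau_r}=\tilde{Z}^{(n_0)}_{\tau_r\wedge\zeta}$ for $r\ge n_0$, so $Z_{\tau_r}$ converges.
\end{enumerate}
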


\begin{proof}
    Fix an integer $n \ge 0$ and define, for $t \ge n$,
    \begin{equation*}
        \tilde{Z}_t^{(n)} = \sum_{u \in \cN_t} X_u(t) \e^{-X_u(t)} \1_{\{X_u(n) > 0\}}.
    \end{equation*}
    The process $(\tilde{Z}_t^{(n)})_{t \ge n}$ is an $(\sF_t)_{t \ge n}$-martingale.
    In order to reduce it to a non-negative process, consider the stopping line $(\cZ,\zeta)$ associated to the stopping times
    \begin{equation*}
        \zeta^{(u)} \coloneq
        \begin{cases}
            \infty & \text{if $\zeta^{(v)} < \infty$ for some strict ancestor $v$ of $u$}, \\
            \inf\{s \ge n : X_u(s) \le 0\} & \text{otherwise}.
        \end{cases}
    \end{equation*}
    By Proposition~\ref{prop:supermartingale}, the process $(\tilde{Z}_{\tau_r \wedge \zeta}^{(n)})_{r \ge n}$ is a non-negative $(\sF_{\cL_r \wedge \cZ})_{r \ge n}$-supermartingale and therefore converges almost surely as $r \to \infty$.
    It follows that, with probability $1$, for any integer $n \ge 0$, the process $(\tilde{Z}_{\tau_r \wedge \zeta}^{(n)})_{r \ge n}$ converges.
    Denote by $\Omega'$ this almost sure event.

    Further consider the event $\Omega''$ where there exists an integer $n_0 \ge 0$ such that, for any $t \ge n_0$, all the positions $X_u(t)$ are non-negative.
    By~\eqref{eq:almost_sure_divergence}, $\Omega''$ has probability $1$ too.
    Therefore, it is also the case of $\Omega' \cap \Omega''$.
    But, on this event, $Z_{\tau_r} = \tilde{Z}_{\tau_r \wedge \zeta}^{(n_0)}$ for $r \ge n_0$ and therefore converges.
    In conclusion, $Z_{\tau_r}$ converges almost surely as $r \to \infty$.
\end{proof}

Now consider the stopping line $(\cL_r \wedge t, \tau_r \wedge t)$ and the associated stopped martingale (see Figure~\ref{fig:bbm_stopped})
\begin{equation*}
    Z_{\tau_r \wedge t} = \sum_{u \in \cL_r} X_u(\tau_r^{(u)}) \e^{-X_u(\tau_r^{(u)})} \1_{\{\tau_r^{(u)} < t\}} + \sum_{u \in \cN_t} X_u(t) \e^{-X_u(t)} \1_{\{\tau_r^{(u)} \ge t\}}.
\end{equation*}

\begin{figure}[!htbp]
    \centering
    \begin{tikzpicture}[xscale=1,yscale=1]
    \tikzmath{\horizon=10;\t=7;\r=2;\a=.5;\b=2;\ymin=-1;\ymax=sqrt(\horizon)*\b;}
    
    \draw[->,>=latex] (0,0) node[left]{$0$} -- (\horizon,0) node[below]{time};
    \draw[->,>=latex] (0,\ymin) -- (0,\ymax) node[left]{space};
    
    \draw[thin,opacity=.5] plot[smooth] coordinates {(0.0,0.00) (0.1,0.23) (0.2,0.21) (0.3,0.25) (0.4,0.89) (0.5,1.21) (0.6,0.83) (0.7,1.23) (0.8,1.05) (0.9,1.54) (1.0,1.21) (1.1,1.36) (1.2,1.53) (1.3,1.62) (1.4,1.66) (1.5,1.42) (1.6,1.64) (1.7,2.10) (1.8,2.40) (1.9,1.91) (2.0,1.94) (2.1,1.43) (2.2,1.64) (2.3,1.68) (2.4,1.91) (2.5,1.38) (2.6,1.29) (2.7,1.42) (2.8,1.61) (2.9,1.77) (3.0,1.49) (3.1,1.23) (3.2,1.67) (3.3,1.57) (3.4,1.77) (3.5,1.63) (3.6,2.29) (3.7,2.18) (3.8,1.99) (3.9,2.23) (4.0,2.37) (4.1,1.63) (4.2,1.67) (4.3,2.08) (4.4,2.24) (4.5,2.10) (4.6,2.38) (4.7,2.20) (4.8,2.09) (4.9,2.20) (5.0,2.44) (5.1,2.83) (5.2,2.63) (5.3,3.34) (5.4,3.20) (5.5,3.00) (5.6,2.17) (5.7,2.18) (5.8,1.70) (5.9,2.60) (6.0,2.59) (6.1,2.49) (6.2,2.51) (6.3,2.65) (6.4,2.96) (6.5,3.45) (6.6,3.25) (6.7,3.24) (6.8,3.49) (6.9,3.63) (7.0,3.28) (7.1,3.73) (7.2,3.38) (7.3,4.38) (7.4,4.33) (7.5,4.18) (7.6,4.20) (7.7,4.11) (7.8,3.95) (7.9,3.94) (8.0,4.02) (8.1,4.18) (8.2,4.67) (8.3,4.89) (8.4,5.05) (8.5,5.50) (8.6,5.69) (8.7,5.34) (8.8,5.03) (8.9,5.07) (9.0,5.13) (9.1,4.99) (9.2,4.60) (9.3,4.52) (9.4,4.63) (9.5,4.60) (9.6,4.86) (9.7,4.96) (9.8,4.59) (9.9,4.17) (10.0,4.26)};
    \draw[thin,opacity=.5] plot[smooth] coordinates {(0.2,0.21) (0.3,0.39) (0.4,0.26) (0.5,0.48) (0.6,0.38) (0.7,0.82) (0.8,1.15) (0.9,0.95) (1.0,0.89) (1.1,1.23) (1.2,1.27) (1.3,1.48) (1.4,1.40) (1.5,1.69) (1.6,1.97) (1.7,2.13) (1.8,2.04) (1.9,2.20) (2.0,1.70) (2.1,2.26) (2.2,2.65) (2.3,2.78) (2.4,2.81) (2.5,2.37) (2.6,2.74) (2.7,2.08) (2.8,2.46) (2.9,2.60) (3.0,2.63) (3.1,2.93) (3.2,2.86) (3.3,3.10) (3.4,3.51) (3.5,3.67) (3.6,3.79)};
    \draw[thin,opacity=.5] plot[smooth] coordinates {(0.5,1.21) (0.6,1.29) (0.7,1.32) (0.8,1.64) (0.9,1.62) (1.0,2.03) (1.1,1.89) (1.2,1.58) (1.3,1.29) (1.4,1.26) (1.5,1.31) (1.6,1.70) (1.7,2.24) (1.8,2.12) (1.9,2.20) (2.0,2.69) (2.1,2.90)};
    \draw[thin,opacity=.5] plot[smooth] coordinates {(0.8,1.05) (0.9,0.23) (1.0,0.45) (1.1,-0.06) (1.2,0.36) (1.3,0.71) (1.4,-0.04) (1.5,-0.11) (1.6,-0.09) (1.7,-0.01) (1.8,-0.18) (1.9,0.05) (2.0,-0.09)};
    \draw[thin,opacity=.5] plot[smooth] coordinates {(0.8,1.05) (0.9,0.80) (1.0,0.82) (1.1,1.22) (1.2,1.29) (1.3,1.67) (1.4,1.81) (1.5,2.18) (1.6,2.41) (1.7,2.42) (1.8,3.09) (1.9,3.66) (2.0,3.75)};
    \draw[thin,opacity=.5] plot[smooth] coordinates {(1.0,1.21) (1.1,1.27) (1.2,1.35) (1.3,1.44) (1.4,0.96) (1.5,1.04) (1.6,1.28) (1.7,0.95) (1.8,1.13) (1.9,0.90) (2.0,0.54)};
    \draw[thin,opacity=.5] plot[smooth] coordinates {(1.2,1.53) (1.3,2.26) (1.4,2.53) (1.5,2.34) (1.6,2.64) (1.7,3.04) (1.8,3.20) (1.9,2.68) (2.0,2.54) (2.1,2.90)};
    \draw[thin,opacity=.5] plot[smooth] coordinates {(1.3,2.26) (1.4,2.40) (1.5,1.80) (1.6,1.93) (1.7,1.62) (1.8,2.10) (1.9,2.09) (2.0,2.18) (2.1,2.81) (2.2,2.75) (2.3,2.36) (2.4,2.41) (2.5,2.66) (2.6,2.62) (2.7,2.86) (2.8,3.29) (2.9,3.31) (3.0,3.46)};
    \draw[thin,opacity=.5] plot[smooth] coordinates {(1.5,1.42) (1.6,0.95) (1.7,0.30) (1.8,0.67) (1.9,1.63) (2.0,1.81) (2.1,1.24) (2.2,1.62) (2.3,1.55) (2.4,1.52) (2.5,2.01) (2.6,2.35) (2.7,2.50) (2.8,2.29) (2.9,2.70) (3.0,2.66) (3.1,2.48) (3.2,2.62) (3.3,2.43) (3.4,3.10) (3.5,3.66) (3.6,3.37) (3.7,3.37) (3.8,3.85) (3.9,3.94) (4.0,4.00)};
    \draw[thin,opacity=.5] plot[smooth] coordinates {(3.1,2.48) (3.2,2.55) (3.3,2.34) (3.4,2.50) (3.5,2.79) (3.6,3.53) (3.7,3.61) (3.8,3.49) (3.9,3.95)};
    \draw[thin,opacity=.5] plot[smooth] coordinates {(3.1,2.48) (3.2,3.04) (3.3,3.24) (3.4,3.29) (3.5,3.14) (3.6,3.17) (3.7,3.50) (3.8,3.90)};
    \draw[thin,opacity=.5] plot[smooth] coordinates {(1.8,2.40) (1.9,2.66) (2.0,2.73) (2.1,2.90)};
    \draw[thin,opacity=.5] plot[smooth] coordinates {(3.6,2.29) (3.7,2.29) (3.8,2.13) (3.9,2.34) (4.0,2.57) (4.1,3.00) (4.2,3.41) (4.3,3.67) (4.4,3.96) (4.5,3.88) (4.6,3.67) (4.7,3.91) (4.8,3.70) (4.9,3.28) (5.0,3.83) (5.1,3.45) (5.2,3.28) (5.3,3.29) (5.4,3.45) (5.5,3.38) (5.6,3.28) (5.7,3.42) (5.8,3.79) (5.9,3.82) (6.0,3.73) (6.1,3.69) (6.2,4.28) (6.3,4.90) (6.4,5.06)};
    \draw[thin,opacity=.5] plot[smooth] coordinates {(4.8,2.09) (4.9,2.01) (5.0,2.34) (5.1,2.30) (5.2,2.75) (5.3,2.54) (5.4,3.00) (5.5,3.17) (5.6,3.34) (5.7,2.92) (5.8,2.45) (5.9,2.13) (6.0,1.79) (6.1,2.47) (6.2,2.46) (6.3,2.92) (6.4,2.91) (6.5,3.33) (6.6,3.69) (6.7,3.92) (6.8,4.34) (6.9,4.32) (7.0,4.24) (7.1,4.25) (7.2,4.52) (7.3,4.98) (7.4,5.44)};
    \draw[thin,opacity=.5] plot[smooth] coordinates {(4.8,2.09) (4.9,2.89) (5.0,2.21) (5.1,1.72) (5.2,2.07) (5.3,2.36) (5.4,2.65) (5.5,2.99) (5.6,2.99) (5.7,2.97) (5.8,2.34) (5.9,2.51) (6.0,2.67) (6.1,2.44) (6.2,2.23) (6.3,2.40) (6.4,1.88) (6.5,1.73) (6.6,1.66) (6.7,1.49) (6.8,1.63) (6.9,1.31)};
    \draw[thin,opacity=.5] plot[smooth] coordinates {(6.4,1.88) (6.5,2.24) (6.6,2.39) (6.7,2.21) (6.8,2.53) (6.9,2.85) (7.0,3.10) (7.1,3.35) (7.2,3.26) (7.3,3.27) (7.4,3.19) (7.5,3.38) (7.6,3.84) (7.7,3.90) (7.8,4.02) (7.9,4.14) (8.0,4.83) (8.1,4.87) (8.2,4.87) (8.3,5.35) (8.4,5.80)};
    \draw[thin,opacity=.5] plot[smooth] coordinates {(7.6,3.84) (7.7,4.03) (7.8,4.14) (7.9,4.05) (8.0,4.32) (8.1,4.37) (8.2,3.95) (8.3,4.53) (8.4,4.57) (8.5,4.99) (8.6,5.26) (8.7,5.90)};
    \draw[thin,opacity=.5] plot[smooth] coordinates {(8.3,5.35) (8.4,5.27) (8.5,5.39) (8.6,5.25) (8.7,5.50) (8.8,5.93)};
    \draw[thin,opacity=.5] plot[smooth] coordinates {(8.7,5.50) (8.8,5.93)};
    \draw[thin,opacity=.5] plot[smooth] coordinates {(7.3,4.98) (7.4,4.84) (7.5,5.04) (7.6,5.51)};
    \draw[thin,opacity=.5] plot[smooth] coordinates {(5.9,2.60) (6.0,3.01) (6.1,2.60) (6.2,2.60) (6.3,2.88) (6.4,3.57) (6.5,3.87) (6.6,3.97) (6.7,4.10) (6.8,4.23) (6.9,4.16) (7.0,4.59) (7.1,4.83) (7.2,5.17) (7.3,5.37) (7.4,5.44)};
    \draw[thin,opacity=.5] plot[smooth] coordinates {(6.1,2.49) (6.2,3.00) (6.3,3.21) (6.4,3.46) (6.5,3.48) (6.6,3.79) (6.7,4.19) (6.8,4.29) (6.9,4.69) (7.0,4.60) (7.1,4.09) (7.2,4.70) (7.3,5.25) (7.4,5.44)};
    \draw[thin,opacity=.5] plot[smooth] coordinates {(7.6,4.20) (7.7,3.84) (7.8,4.00) (7.9,3.89) (8.0,3.71) (8.1,4.76) (8.2,5.10) (8.3,4.77) (8.4,4.63) (8.5,5.00) (8.6,4.97) (8.7,5.35) (8.8,4.73) (8.9,4.84) (9.0,4.82) (9.1,4.80) (9.2,5.06) (9.3,5.40) (9.4,5.25) (9.5,5.65) (9.6,6.07) (9.7,6.23)};
    \draw[thin,opacity=.5] plot[smooth] coordinates {(8.0,3.71) (8.1,3.89) (8.2,3.68) (8.3,3.61) (8.4,3.99) (8.5,4.15) (8.6,4.17) (8.7,4.17) (8.8,4.10) (8.9,4.18) (9.0,4.01) (9.1,4.00) (9.2,4.48) (9.3,4.33) (9.4,4.45) (9.5,4.18) (9.6,4.01) (9.7,4.47) (9.8,4.95) (9.9,5.09) (10.0,5.06)};
    \draw[thin,opacity=.5] plot[smooth] coordinates {(8.0,3.71) (8.1,4.04) (8.2,4.33) (8.3,4.45) (8.4,4.38) (8.5,4.43) (8.6,4.69) (8.7,4.77) (8.8,4.56) (8.9,4.11) (9.0,4.06) (9.1,3.99) (9.2,4.05) (9.3,3.66) (9.4,4.32) (9.5,4.09) (9.6,4.23) (9.7,5.11) (9.8,4.98) (9.9,4.88) (10.0,5.32)};
    \draw[thin,opacity=.5] plot[smooth] coordinates {(8.9,4.11) (9.0,4.40) (9.1,4.80) (9.2,5.26) (9.3,5.45) (9.4,5.26) (9.5,5.87) (9.6,6.10) (9.7,5.96) (9.8,5.98) (9.9,5.89) (10.0,6.17)};
    \draw[thin,opacity=.5] plot[smooth] coordinates {(9.5,4.09) (9.6,4.19) (9.7,4.37) (9.8,4.46) (9.9,4.45) (10.0,4.61)};
    
    \draw[dashed] (\t,{sqrt(\t)*\a-.042}) -- (\t,0) node[below]{$t$};
    \draw (\horizon,{sqrt(\horizon)*\a}) node[right]{$a(s)\sqrt{s}$};
    \draw (\horizon,{sqrt(\horizon)*\b}) node[right]{$b(s)\sqrt{s}$};
    
    \draw[very thick,YellowOrange,line cap=round] (\r+.042,{sqrt(\r)*\a-.042}) -- (\r+.042,\ymin);
    \draw (\r,0) node[below right]{$r$};
    \draw[very thick,YellowOrange,domain=\r+.042:\horizon,samples={(\horizon-(\r+.042))*10},line cap=round]
        plot (\x,{sqrt(\x)*\a-.042});
    \draw[very thick,YellowOrange,line cap=round] (\r+.042,{sqrt(\r)*\b+.042}) -- (\r+.042,\ymax);
    \draw[very thick,YellowOrange,domain=\r+.042:\horizon,samples={(\horizon-(\r+.042))*10},line cap=round]
        plot (\x,{sqrt(\x)*\b+.042});
    
    \draw[very thick,PineGreen,line cap=round] (\r,{sqrt(\r)*\a}) -- (\r,\ymin);
    \draw[very thick,PineGreen,domain=\r:\t,samples={(\t-\r)*10},line cap=round]
        plot (\x,{sqrt(\x)*\a});
    \draw[very thick,PineGreen,line cap=round] (\r,{sqrt(\r)*\b}) -- (\r,\ymax);
    \draw[very thick,PineGreen,domain=\r:\t,samples={(\t-\r)*10},line cap=round]
        plot (\x,{sqrt(\x)*\b});
    \draw[very thick,PineGreen,line cap=round] (\t,{sqrt(\t)*\a}) -- (\t,{sqrt(\t)*\b});
    
    \end{tikzpicture}
    \caption{Representation of a branching Brownian motion with particles stopped at two different stopping lines: the yellow curve corresponds to $\cL_r$, the green one to $\cL_r \wedge t$.}
    \label{fig:bbm_stopped}
\end{figure}

The following lemma is useful to determine the almost sure limit $Z_{\tau_\infty}$ and to analyse the left-hand side of~\eqref{eq:path_localisation}.

\begin{lemma}\label{lem:two_step_limit}
    We have $\lim_{r \to \infty} \lim_{t \to \infty} Z_{\tau_r \wedge t} = Z_\infty$ almost surely.
\end{lemma}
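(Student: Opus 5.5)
The plan is to reduce the two-step limit to Lemma~\ref{lem:stopped_Z}, applied along a \emph{diagonal} family of stopping lines $(\cL_r \wedge T(r), \tau_r \wedge T(r))_{r}$, where $T(r) \ge r$ is a deterministic, non-decreasing function that I choose later. I take $r$ to range over the integers first.

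\emph{Step 1: the diagonal family converges to $Z_\infty$ for every admissible $T$.} First I check that $r \mapsto (\cL_r,\tau_r)$ is non-decreasing for $\preceq$. Fix $r < r'$ and a particle stopped at $\tau_r^{(u)}$. If $\tau_r^{(u)} \ge r'$, the particle exits the tube at the same time for $\tau_{r'}$. If $\tau_r^{(u)} < r'$, then $u$ or its descendants are stopped later, and those descendants are strict descendants of $u$. Since $T$ is non-decreasing, $\tau_r \wedge T(r)$ is non-decreasing as well. Condition~\ref{it:cutting_lines} of Lemma~\ref{lem:stopped_Z} holds because $\cL_r \wedge T(r) \preceq \cN_{T(r)}$. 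Condition~\ref{it:divergent_lines} holds because every stopping time in $\tau_r \wedge T(r)$ is at least $r$ (recall $\tau_r^{(u)} \ge r$ by~\eqref{eq:def_stopping_line} and $T(r) \ge r$), so $\cN_s \preceq \cL_r \wedge T(r)$ as soon as $r \ge s$. Lemma~\ref{lem:stopped_Z} therefore gives $Z_{\tau_r \wedge T(r)} \to Z_\infty$ almost surely as $r \to \infty$, and this holds for \emph{every} such $T$.

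\emph{Step 2: the inner limit exists.} For fixed $r$, I show that $L_r \coloneq \lim_{t\to\infty} Z_{\tau_r \wedge t}$ exists almost surely. I would copy the proof of Lemma~\ref{lem:supermartingale}, with the family $(\cL_r\wedge t)_{t}$ in place of $(\cL_r)_r$. For each integer $n$, Proposition~\ref{prop:supermartingale} applied to $\tilde Z^{(n)}$ and the lines $(\cL_r \wedge t)\wedge \cZ$, indexed by $t$, gives a non-negative supermartingale in $t$, which converges. On the event $\Omega''$ where all particles stay non-negative after some time $n_0$, we have $Z_{\tau_r\wedge t} = \tilde Z^{(n_0)}_{(\tau_r\wedge t)\wedge\zeta}$ for $t \ge n_0$ and $r \ge n_0$. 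For small $r$, I would note that the contribution of particles stopped before $n_0$ is fixed once $t \ge n_0$, so convergence still follows.

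\emph{Step 3: choosing $T$ so the diagonal tracks the inner limit.} Since $Z_{\tau_r\wedge t} \to L_r$ almost surely, and in particular in probability, for each integer $r$ I can choose a deterministic $T(r) \ge \max(r, T(r-1))$ with
\[
\P\bigl(|Z_{\tau_r\wedge T(r)} - L_r| > 1/r\bigr) \le 2^{-r}.
\]
Borel--Cantelli then gives $Z_{\tau_r \wedge T(r)} - L_r \to 0$ almost surely, and combined with Step~1 this yields $L_r \to Z_\infty$ along the integers.

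\emph{Step 4: extension to real $r$.} I expect this to be the main obstacle, and I would try a sandwich argument. For $n \le r \le n+1$, the lines satisfy $\cL_n \wedge t \preceq \cL_r\wedge t \preceq \cL_{n+1}\wedge t$. Rather than using monotonicity of $Z$ along these lines, which is false in general, I would bound $|L_r - L_n|$ directly. The difference comes from particles stopped in $[n,n+1]$ at $\tau_n$ and their subsequent evolution. I would control it by repeating Step~3 with a countable dense set of $r$ and using that $T$ only needs to be chosen on a countable set. The simplest fix is to choose $T$ on the rationals, conclude $L_r \to Z_\infty$ along the rationals, and then argue that $r \mapsto L_r$ is right-continuous in $r$ almost surely: a particle exits at a time that is almost surely not a given level $r$, so the stopped configuration changes only at the finitely many exit times in bounded windows of $[r, r+\delta]$ that matter.
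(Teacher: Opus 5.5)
Your Steps~1--3 are correct, and they identify the limit by a different route from the paper's. The paper first deduces from Lemma~\ref{lem:stopped_Z} and a diagonal lemma that $\limsup_{r}\limsup_{t}\P(|Z_{\tau_r\wedge t}-Z_\infty|>\vep)=0$. It then proves separately that $\lim_r\lim_t Z_{\tau_r\wedge t}$ exists almost surely, and concludes with Fatou. You instead tune the deterministic diagonal $T(r)$ to the speed of the inner convergence and apply Borel--Cantelli. This gives $L_n\to Z_\infty$ almost surely along the integers directly, with no a priori existence of the outer limit. Step~2 can be done more simply, as in the paper. By Proposition~\ref{prop:optional_stopping_theorem}, $(Z_{\tau_r\wedge t})_{t\ge r}$ is a martingale, and it is bounded below by the $\sF_r$-measurable variable $\sum_{u\in\cN_r:X_u(r)<0}X_u(r)\e^{-X_u(r)}$. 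This removes the need for your case distinction on $r<n_0$.

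The gap is Step~4: neither proposed fix works.

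\emph{The rational version.} Borel--Cantelli over all rationals needs summable error probabilities and vanishing tolerances. On the infinitely many rationals of $[n,n+1]$, the thresholds $T(q)$ you need therefore have no reason to stay bounded, and generically they blow up. But Lemma~\ref{lem:stopped_Z} only applies to a \emph{non-decreasing} family of lines. So $T$ must be non-decreasing and finite, hence bounded on $[n,n+1]$. What you would really need is control of $Z_{\tau_q\wedge t}\to L_q$ uniformly in $q$ on compact windows, and you do not have it.

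\emph{The right-continuity argument.} The fact that almost surely no particle sits on the boundary at time $r$ holds for each fixed $r$, not simultaneously for all $r$. Simultaneity is exactly what is required to pass from a dense set to every real $r$. Moreover, $L_r$ depends on the whole future of the subtrees affected by the change of line, so counting exit times in $[r,r+\delta]$ does not control $L_{r+\delta}-L_r$.

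\emph{The missing observation.} This is how the paper obtains existence over all $r$. Although $L_r$ is not monotone, it splits as $L_r=Z_{\tau_r}+D_r$, where
$D_r=\lim_{t}\sum_{u\in\cN_t}X_u(t)\e^{-X_u(t)}\1_{\{\forall s\in[r,t],\,X_u(s)\in[a(s)\sqrt{s},b(s)\sqrt{s}]\}}$.
The sum is non-decreasing in $r$ once all positions are positive, so $D_r$ is almost surely non-decreasing. Meanwhile $Z_{\tau_r}$ converges by Lemma~\ref{lem:supermartingale}. Hence $\lim_{r\to\infty}L_r$ exists over all real $r$, and your Step~3 then identifies it as $Z_\infty$.
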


\begin{proof}
    By Lemma~\ref{lem:stopped_Z}, the above almost sure limit holds if we make $r$ depend on $t$ and increase to $\infty$ as $t \uparrow \infty$.
    Indeed, for such a choice of $r = r_t$, the family of stopping lines $(\cL_r \wedge t)_{t \ge 0}$ is non-decreasing, the relation $\cL_r \wedge t \preceq \cN_t$ implies~\ref{it:cutting_lines}, and the relation $\cN_r \preceq \cL_r \wedge t$ implies~\ref{it:divergent_lines}.
    In particular, the stopped martingale $Z_{\tau_r \wedge t}$ converges in probability to $Z_\infty$ as $t \to \infty$.
    Since this holds for any choice of $r = r_t$ that increases to $\infty$, a diagonal argument (see \eg \cite[Lemma~A.1]{ChataignierLuo2026}) shows that, for any $\vep > 0$,
    \begin{equation}\label{eq:two_step_limit_proba}
        \limsup_{r \to \infty} \limsup_{t \to \infty} \P(|Z_{\tau_r \wedge t} - Z_\infty| > \vep) = 0.
    \end{equation}
    It remains to strengthen this into an almost sure convergence.
    
    To this end, we show that the following limit exists almost surely:
    \begin{equation*}
        \tilde{Z} \coloneq \lim_{r \to \infty} \lim_{t \to \infty} Z_{\tau_r \wedge t}.
    \end{equation*}
    Fix $r \ge 0$.
    By Proposition~\ref{prop:optional_stopping_theorem}, the process $(Z_{\tau_r \wedge t})_{t \ge r}$ is an $(\sF_{\cL_r \wedge t})_{t \ge r}$-martingale.
    Furthermore, conditionally on $\sF_{\cL_r \wedge r} = \sF_r$, it is bounded from below:
    \begin{equation*}
        Z_{\tau_r \wedge t} \ge \sum_{u \in \cN_r : X_u(r) < 0} X_u(r) \e^{-X_u(r)},
    \end{equation*}
    Therefore, it converges almost surely as $t \to \infty$ to some limit $Z_{\tau_r \wedge \infty}$.
    To show that the latter converges almost surely as $r \to \infty$, we decompose it as
    \begin{equation*}
        Z_{\tau_r \wedge \infty} = Z_{\tau_r} + (Z_{\tau_r \wedge \infty} - Z_{\tau_r}).
    \end{equation*}
    By Lemma~\ref{lem:supermartingale} the term $Z_{\tau_r}$ converges almost surely as $r \to \infty$.
    Regarding the second term, it has the following almost sure expression:
    \begin{equation*}
        Z_{\tau_r \wedge \infty} - Z_{\tau_r} = \lim_{t \to \infty} \sum_{u \in \cN_t} X_u(t) \e^{-X_u(t)} \1_{\{\forall s \in [r,t], X_u(s) \in [a(s)\sqrt{s},b(s)\sqrt{s}]\}}.
    \end{equation*}
    By~\eqref{eq:almost_sure_divergence}, almost surely, for $t$ large enough, all the terms of the above sum are non-negative.
    In particular, almost surely, $Z_{\tau_r \wedge \infty} - Z_{\tau_r}$ is non-decreasing in $r$ as limit of non-decreasing functions, and then converges in $\R \cup \{\infty\}$ as $r \to \infty$.
    Hence, $\tilde{Z}$ exists almost surely in $\R \cup \{\infty\}$.
    
    Now, for any $\vep > 0$,
    \begin{equation}\label{eq:two_step_limit_as}
        \P(|\tilde{Z} - Z_\infty| > \vep) = \P(\lim_{r \to \infty} \lim_{t \to \infty} |Z_{\tau_r \wedge t} - Z_\infty| > \vep) \le \liminf_{r \to \infty} \liminf_{t \to \infty} \P(|Z_{\tau_r \wedge t} - Z_\infty| > \vep),
    \end{equation}
    by Fatou's lemma.
    
    The conclusion follows from~\eqref{eq:two_step_limit_proba} and~\eqref{eq:two_step_limit_as}.
\end{proof}

Recall that $Z_{\tau_\infty}$ is the almost sure limit of $Z_{\tau_r}$, defined in~\eqref{eq:def_stopped_derivative_martingale}, as $r \to \infty$.
The following lemma identifies the mass captured by a stopping line, depending on whether the integrability conditions in~\eqref{eq:integrability_conditions} hold or not.

\begin{lemma}\label{lem:0_1_law}
Almost surely, the limiting mass satisfies
\begin{equation*}
    Z_{\tau_\infty} = 
    \begin{cases} 
        0 & \text{if the integrability conditions in~\eqref{eq:integrability_conditions} hold,} \\ 
        Z_\infty & \text{otherwise.} 
    \end{cases}
\end{equation*}
\end{lemma}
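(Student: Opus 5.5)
The plan is to compute $\E[Z_{\tau_\infty}]$-type quantities through a spine (many-to-one) decomposition, and then upgrade to almost sure statements using Lemma~\ref{lem:two_step_limit} and a $0$--$1$ argument.

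Under the measure $\mathbb{Q}$ defined by $\d{\mathbb{Q}}|_{\sF_t} = Z_t \,\d{\P}|_{\sF_t}$ (on the event of positive paths, via the truncated martingale $\tilde Z^{(n)}$ of Lemma~\ref{lem:supermartingale}), the spine is a $3$-dimensional Bessel process $R$. The stopping line $\cL_r$ has a many-to-one formula: for a nonnegative functional,
\begin{equation*}
\E_x[\tilde Z^{(n)}_{\tau_r\wedge\zeta}] = x\,\P^{R}_x(\tau_r < \infty,\ R \text{ stopped before hitting } 0),
\end{equation*}
which reduces expectations of the stopped mass to the probability that the Bessel path exits the tube $[a(s)\sqrt s, b(s)\sqrt s]$ after time $r$. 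By the Dvoretsky--Erd\Haccent{o}s test~\eqref{eq:dvoretsky_erdos_test}, this exit probability tends to $0$ as $r\to\infty$ when~\eqref{eq:integrability_conditions} holds. In that case, taking $n=n_0$ large so that all particles stay positive after $n_0$, Fatou's lemma gives $\E[Z_{\tau_\infty}\mid \sF_{n_0}] \le \liminf_r \sum_{u\in\cN_{n_0}} X_u(n_0)\e^{-X_u(n_0)}\,\P^R_{X_u(n_0)}(\text{exit after } r)=0$. Since $Z_{\tau_\infty}\ge 0$ eventually, it vanishes almost surely. The conditional starting times shift the Bessel clock, which is harmless because the test is a tail property.

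When~\eqref{eq:integrability_conditions} fails, the natural route is the complementary identity. From the definitions, $Z_{\tau_r\wedge\infty} = Z_{\tau_r} + Y_r$, where $Y_r$ is the in-tube limit, so Lemma~\ref{lem:two_step_limit} gives $Z_{\tau_\infty} + \lim_r Y_r = Z_\infty$. It therefore suffices to show $\lim_r Y_r = 0$ almost surely. For each fixed $r$, $Y_r$ is the limit of a nonnegative martingale-like quantity restricted to paths that stay in the tube forever. Computing $\E[Y_r\mid\sF_{n_0}]$ with the same spine argument, and using Fatou for the $t$-limit, bounds it by $\sum_u X_u(n_0)\e^{-X_u(n_0)}\,\P^R(\text{stay in tube on }[r,\infty))$. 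By~\eqref{eq:dvoretsky_erdos_test} this is $0$ when the test fails, since the Bessel path a.s.\ exits infinitely often. Hence $Y_r=0$ a.s.\ for every $r$, and $Z_{\tau_\infty}=Z_\infty$. Symmetrically, in the first case this identity shows that all the mass is carried by $Y_r$, which is consistent with Theorem~\ref{th:path_localisation}.

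The main obstacle is justifying the many-to-one formula along stopping lines when particles may start at times $\gamma=n_0$, and the Fatou step for the $t\to\infty$ limit. The issue is that we need upper bounds in expectation of an almost sure limit, and Fatou gives exactly that, provided positivity holds. Positivity is ensured by restricting to the event $\Omega''$ and working with $\tilde Z^{(n_0)}$ together with the stopping at $\zeta$, as in the proof of Lemma~\ref{lem:supermartingale}. One must also check that the Dvoretsky--Erd\Haccent{o}s dichotomy holds uniformly enough in the starting point $(n_0,x)$. This follows because the conclusion concerns a tail event of the Bessel process, whose probability is $0$ or $1$ for every starting point.
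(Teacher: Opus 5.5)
Your argument is essentially the paper's: many-to-one along the stopping line, the $h$-transform turning the killed spine into a $3$-dimensional Bessel process, Fatou, and the Dvoretsky--Erd\Haccent{o}s test in each direction (the second case via $\lim_r Y_r = Z_\infty - Z_{\tau_\infty}$ from Lemmas~\ref{lem:supermartingale} and~\ref{lem:two_step_limit}); the only variation is that you get positivity by killing at $0$ after a time $n$, whereas the paper uses the weight $(X_u+\ell)\e^{-X_u}\1_{\{\underline{X}_u \ge -\ell\}}$ and lets $\ell \to \infty$. Two points need tidying: the $n_0$ of $\Omega''$ is random and not a stopping time, so conditioning on $\sF_{n_0}$ is not meaningful, and you should instead fix a deterministic $n$, use $Z_{\tau_r}\1_{\Omega_n} \le \tilde{Z}^{(n)}_{\tau_r \wedge \zeta}$ (with $\Omega_n$ the event that all positions are positive after time $n$, the right-hand side being non-negative everywhere) to get $\E[Z_{\tau_\infty}\1_{\Omega_n}] = 0$, do the same for $Y_r$, and take the union over $n$; and a $0$--$1$ law at each starting point does not by itself make the Bessel probabilities independent of $(n,y)$, which instead follows from the Markov property at time $n$ (for Lebesgue-a.e.\ $y$, which suffices since $B_n$ has a density).
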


\begin{proof}
    Note that, by~\eqref{eq:almost_sure_divergence}, there exists almost surely some time from which all particle positions are positive.
    In particular, almost surely, for $r$ and $t$ large enough, we have
    \begin{equation*}
        0 \le \sum_{u \in \cL_r} X_u(\tau_r^{(u)}) \e^{-X_u(\tau_r^{(u)})} \1_{\{\tau_r^{(u)} < t\}} \le Z_{\tau_r \wedge t}.
    \end{equation*}
    Letting $t \to \infty$ and then $r \to \infty$, we deduce from Lemmas~\ref{lem:supermartingale} and~\ref{lem:two_step_limit} that $0\leq Z_{\tau_\infty}\leq Z_\infty$ almost surely.
    Therefore, to prove that $Z_{\tau_\infty} = 0$ (\resp $Z_{\tau_\infty} = Z_\infty$), it suffices to bound $\E[Z_{\tau_\infty}] \le 0$ (\resp $\E[Z_\infty - Z_{\tau_\infty}] \le 0$).

    First, suppose that the integrability conditions hold.
    \begin{align*}
    \E\left[Z_{\tau_\infty}\right]
    & = \E\left[\liminf_{r \to \infty} Z_{\tau_r}\right] \\
    & \le \E\left[\liminf_{\ell \to \infty} \liminf_{r \to \infty} \sum_{u \in \cL_r} (X_u(\tau_r^{(u)}) + \ell) \e^{-X_u(\tau_r^{(u)})} \1_{\{\underline{X}_u(\tau_r^{(u)}) \ge -\ell\}}\right] \\
    & \le \liminf_{\ell \to \infty} \liminf_{r \to \infty} \E\left[\sum_{u \in \cL_r} (X_u(\tau_r^{(u)}) + \ell) \e^{-X_u(\tau_r^{(u)})} \1_{\{\underline{X}_u(\tau_r^{(u)}) \ge -\ell\}}\right],
    \end{align*}
    by Fatou's lemma.
    By the analogue of the many-to-one formula for stopping lines,
    \begin{align*}
        \E\left[\sum_{u \in \cL_r} (X_u(\tau_r^{(u)}) + \ell) \e^{-X_u(\tau_r^{(u)})} \1_{\{\underline{X}_u(\tau_r^{(u)}) \ge -\ell\}}\right] &= \E\left[(B_{\tau_r} + \ell) \1_{\{\underline{B}_{\tau_r} \ge -\ell\}} \1_{\{\tau_r < \infty\}}\right]\\
        &=\ell \P_\ell\left(\exists s \ge r, R_s-\ell \notin [a(s)\sqrt{s},b(s)\sqrt{s}]\right),
    \end{align*}
    where $(R_s)_{s\geq0}$ is a $3$-dimensional Bessel process. Moreover,
    \begin{equation*}
        \lim_{r \to \infty} \P_\ell\left(\exists s \ge r, R_s-\ell \notin [a(s)\sqrt{s},b(s)\sqrt{s}]\right)= \P_\ell\left(R_s -\ell \notin [a(s)\sqrt{s},b(s)\sqrt{s}] \text{ infinitely often}\right). 
    \end{equation*}
    Therefore, if the integrability conditions in~\eqref{eq:integrability_conditions} hold,
    \begin{equation*}
        \E\left[Z_{\tau_\infty}\right]\leq \liminf_{\ell \to \infty}\ell \P_\ell\left(R_s -\ell \notin [a(s)\sqrt{s},b(s)\sqrt{s}] \text{ infinitely often}\right)=0,
    \end{equation*}
    by the Dvoretsky--Erd\Haccent{o}s test~\eqref{eq:dvoretsky_erdos_test}, and $Z_{\tau_\infty}=0$ almost surely.
    
    On the other hand, by Lemmas~\ref{lem:supermartingale} and~\ref{lem:two_step_limit},
    \begin{align*}
    \E[Z_\infty-Z_{\tau_\infty}] &= \E\left[\liminf_{r \to \infty} \liminf_{t \to \infty} (Z_{\tau_r \wedge t} - \sum_{u \in \cL_r} X_u(\tau_r^{(u)}) \e^{-X_u(\tau_r^{(u)})} \1_{\{\tau_r^{(u)} < t\}})\right] \\
    &=
    \E\left[
    \liminf_{r\to\infty}\liminf_{t\to\infty}
    \sum_{u\in\cN_t}
    X_u(t)\e^{-X_u(t)}
    \1_{\{
    \forall s\in[r,t],\,
    X_u(s)\in[a(s)\sqrt{s},b(s)\sqrt{s}]
    \}}\right]
    \\
    & \le \E\left[\liminf_{\ell \to \infty} \liminf_{r \to \infty} \liminf_{t \to \infty} \sum_{u \in \cN_t} (X_u(t) + \ell) \e^{-X_u(t)} \1_{\{\underline{X}_u(t) \ge -\ell\}} \1_{\{\forall s \in [r,t], X_u(s) \in [a(s)\sqrt{s},b(s)\sqrt{s}]\}}\right] \\
    & \le \liminf_{\ell \to \infty} \liminf_{r \to \infty} \liminf_{t \to \infty} \E\left[\sum_{u \in \cN_t} (X_u(t) + \ell) \e^{-X_u(t)} \1_{\{\underline{X}_u(t) \ge -\ell\}} \1_{\{\forall s \in [r,t], X_u(s) \in [a(s)\sqrt{s},b(s)\sqrt{s}]\}}\right].
    \end{align*} 
    Applying the many-to-one formula once again,
    \begin{align*}
        &\E\left[\sum_{u \in \cN_t} (X_u(t) + \ell) \e^{-X_u(t)} \1_{\{\underline{X}_u(t) \ge -\ell\}} \1_{\{\forall s \in [r,t], X_u(s) \in [a(s)\sqrt{s},b(s)\sqrt{s}]\}}\right] \\
        & \qquad = \E\left[(B_{t} + \ell) \1_{\{\underline{B}_{t} \ge -\ell\}}\1_{\{\forall s \in [r,t], B_s \in [a(s)\sqrt{s},b(s)\sqrt{s}]\}}\right] \\
        & \qquad = \ell \P_\ell\left(\forall s \in [r,t], R_s-\ell \in [a(s)\sqrt{s},b(s)\sqrt{s}]\right).
    \end{align*}
     As before,
    \begin{equation*}
        \lim_{r \to \infty}\lim_{t \to \infty} \P_\ell\left(\forall s \in [r,t], R_s-\ell \in [a(s)\sqrt{s},b(s)\sqrt{s}]\right)= \P_\ell\left(R_s -\ell \in [a(s)\sqrt{s},b(s)\sqrt{s}] \text{ eventually}\right). 
    \end{equation*}
    Therefore, if the integrability conditions in~\eqref{eq:integrability_conditions} do not hold,
    \begin{equation*}
        \E[Z_\infty-Z_{\tau_\infty}] \le \liminf_{\ell \to \infty}\ell \P_\ell\left(R_s -\ell \in [a(s)\sqrt{s},b(s)\sqrt{s}] \text{ eventually}\right)=0,
    \end{equation*}
    by the Dvoretsky--Erd\Haccent{o}s test~\eqref{eq:dvoretsky_erdos_test}, and $Z_{\tau_\infty}=Z_\infty$ almost surely.
\end{proof}

We are now able to prove the main result.

\begin{proof}[Proof of Theorem~\ref{th:path_localisation}]
    The left-hand side of~\eqref{eq:path_localisation} can be expressed as
    \begin{multline*}
        \lim_{r \to \infty} \lim_{t \to \infty} \sum_{u \in \cN_t} X_u(t) \e^{-X_u(t)} \1_{\{\forall s \in [r,t], X_u(s) \in [a(s)\sqrt{s},b(s)\sqrt{s}]\}} \\
        = \lim_{r \to \infty} \lim_{t \to \infty} (Z_{\tau_r \wedge t} - \sum_{u \in \cL_r} X_u(\tau_r^{(u)}) \e^{-X_u(\tau_r^{(u)})} \1_{\{\tau_r^{(u)} < t\}}).
    \end{multline*}
    By Lemmas~\ref{lem:supermartingale} and~\ref{lem:two_step_limit}, the above limit exists almost surely and is equal to $Z_\infty - Z_{\tau_\infty}$.
    By Lemma~\ref{lem:0_1_law}, almost surely, this limit is either $0$ or $Z_\infty$ depending on whether the integrability conditions~\eqref{eq:integrability_conditions} hold or not, which concludes.
\end{proof}

\section*{Acknowledgements}
The authors gratefully acknowledge financial support from the French National Research Agency through the ANR project ANR-24-CE40-1833 led by Bastien Mallein, which funded research visits to Toulouse and conference participation.
The authors warmly thank Michel Pain for helpful discussions throughout the project. J.B. also thanks Toulouse Jean Jaurès University and the IMT for a one-month invited professorship during which part of this work was undertaken.

\bibliographystyle{abbrv}
\bibliography{biblio}

\end{document}